\newcommand{\abs}[1]{\lvert#1\rvert}
\newcommand{\grad}[1]{{\nabla} #1} 
\renewcommand{\div}[1]{{\nabla} \cdot #1} 
\newcommand*\Laplace{\mathop{}\!\mathbin\bigtriangleup}
\newcommand{\be}{\begin{equation}}
\newcommand{\ee}{\end{equation}}
\newcommand{\pa}{\partial}
\newcommand{\an}[1]{\langle #1 \rangle}
\newcommand{\RR}{\mathbb R}
\newcommand{\eps}{\varepsilon}
\newtheorem{theorem}{Theorem}[section]
\newtheorem{definition}[theorem]{Definition}
\newtheorem{lemma}[theorem]{Lemma}
\newtheorem{proposition}[theorem]{Proposition}
\newtheorem{property}[theorem]{Property}
\title{Approximate Green's Function for the Conductivity Equation With Conormal Coefficient
\thanks{This article is part of the author's 2016 Ph.D. thesis at the University of Rochester. The author thanks her advisor Prof. Allan Greenleaf, and the rest of the committee members: Prof. Alex Iosevich, Prof. Dan Geba and Prof. Marvin Doyley. Funds were provided by the National Science Foundation, NSF grants numbers DMS-0853892 and -1362271.}}
\author{%
Denitza Straub\footnote{%
UR Mathematics, 915 Hylan Building,
University of Rochester, RC Box 270138, Rochester, NY 14627, USA, e-mail: dgintcheva@gmail.com}
}
\date{}
\begin{document}
\maketitle

\begin{abstract}

We construct an approximate Green's function for $L_{\gamma}:=\div \gamma(x) \grad u(x)$, which belongs to a class of Fourier Integral Operators (FIOs) associated to two canonical relations. This leads to analysis of the composition of two FIOs, associated to a canonical relation with a zero section problem. The resulting composition is a sum of two FIOs, each associated to two intersecting canonical relations. 

\smallskip\noindent
{\em Keywords:} Microlocal analysis, divergence form operator, conductivity equation, Green's function, composition of FIOs with zero section. 

\smallskip\noindent
{\em MSC 35S30 \and MSC 58J40 \and MSC 31B20
}
\end{abstract}

\section{Introduction}

The first goal of this article is to explicitly construct a Green's function for the divergence form operator occurring in the conductivity equation:
\begin{equation}
L_{\gamma}:=\div \gamma(x) \grad u(x),
\end{equation}
where $\gamma(x)$ is assumed to be a real-valued conductivity, bounded below and above by a positive constant, and $u$ respresents a scalar-valued electrical potential. 

One instance in which finding an explicit form of the Green's function is necessary is for an application in medical imaging. In particular, consider a model class of hybrid inverse problems, in which the goal is to determine the unknown log-conductivity $\sigma$ in:
\begin{equation}\label{eq-cond-sigma}
L_{\sigma}u:=-\div e^{\sigma(x)} \grad u(x)=0, 
\end{equation}
in a domain $\Omega$ from boundary conditions:
\begin{equation}\label{eq-bdry}
u\vert_{\partial \Omega}=f, 
\end{equation}
and an internal functional, available for all of $\Omega$, which often takes the form: 
\begin{equation}
F(x; \sigma)=e^{\sigma}\abs{\grad u(x; \sigma)}^p, \quad 0<p\leq 2. 
\end{equation}
In particular, when $p=1$, the problem arises in Current Density Impedance Imaging (CDII) or Magnetic Resonance Electrical Impedance Tomography (MREIT). When $p=2$, the problem models Ultrasound Modulated Electrical Impedance Tomography (UMEIT). 

Kuchment and Steinhauer \cite{Kuchment2012} study the nonlinear map $\sigma \rightarrow F(\sigma)$, and its linearization at a smooth base point $\sigma_0$. They show that the Fr\'{e}chet derivative of the nonlinear map satisfies the equation:
\begin{equation}\label{eq-concluding1}
dF_{\sigma_0}(\rho)=\rho e^{\sigma_0}\abs{\nabla u_0}^p + p e^{\sigma_0} \frac{\nabla u_0 \cdot \nabla v(\rho)}{\abs{\nabla u_0}^{2-p}}, 
\end{equation}
where $v$ solves the boundary value problem:
\begin{equation} \label{eq-concluding2}
  \begin{split}
    L_{\sigma_0}v &= - \div (e^{\sigma_0} \nabla v) = \div (\rho e^{\sigma_0} \nabla u_0), \quad x \in \Omega, \\  
    v(x) & =0, \quad x \in \partial \Omega.
  \end{split}
\end{equation}
They show that this map is Fredholm, or semi-Fredholm, depending on $p$, when the assumed background log- conductivity $\sigma_0$ is in $C^{\infty}$.  The assumption that the background conductivity is $C^\infty$ is physically unrealistic, because the body or an industrial part contains many quite varied organs and tissues or components. One would not expect the conductivity to change smoothly across their boundaries. Furthermore, even within an organ, there are often microstructures not consistent with modeling by $C^\infty$ physical parameters. 

To achieve the greatest generalization, one would allow $\gamma=e^{\sigma}$ to be in $L^{\infty}$, bounded below by a positive constant. However, restricting $\gamma$ to a conormal class is still a good first step to generalizing the prior results. The Green's function becomes essential for solving the non-homogeneous equation (\ref{eq-concluding2}) for $v$, which is a key input for the linearized map (\ref{eq-concluding1}). 

The Green's function exists by reason of abstract functional analysis (\cite{Littman63}, Theorem 2.3). However, for the scale of hybrid problems in medical imaging, discussed above, we need a fairly concrete realization. This article provides an explicit construction when $\gamma$ is restricted to be a conormal distribution. In particular, we assume $\gamma$ can be written as a conormal distribution of order $\mu<-1$ for $S$, where $S$ is a codimension one submanifold of $\mathbb{R}^n$ with defining function $h(x)=0$.
This means, we can write:
\begin{equation}\label{eq-gamma-intro} 
 \gamma(y)=\int e^{i h(y) \cdot \theta} a_{\mu}(y, \theta) d\theta, \quad a_{\mu}\in S^{\mu}(\mathbb{R}^n \times \mathbb{R}^1\setminus 0),
\end{equation}
where $a_{\mu}$ is a standard \emph{H\"ormander symbol} of type $\rho=1, \delta=0$ \cite{Hormander71}. A conormal distribution for $S$ implies smoothness away from $S$, while the order on the symbol $\mu<-1$ means $\gamma$ is continuous across the interface $S$, but is in $C^{0,\epsilon}(\Omega)$ for $\epsilon=-\mu-1$.

For the explicit construction of the Green's function, we break up $L_{\gamma}$ into a sum of five operators,
\begin{equation} \label{eq-lg-as-sum}
L_{\gamma}=A_1^2+A_1^1+A_2+A_3^{\mu}+A_3^{\mu+1}
\end{equation}
In the above expression $A_2 \in I^{\mu+\frac{5}{2}} (C_{\Sigma})$, a standard Fourier integral operator (FIO), associated with the canonical relation, represented by the flowout of $T^*\RR^n\vert_{S}$ by the Hamiltonian vector field $H_h=-\nabla h \cdot \frac{\partial}{\partial \xi}$, namely,
\begin{equation}\label{eq-C-sigma}
C_{\Sigma}=\{(x, \xi, y, \eta) \vert x=y, \eta=\xi-\nabla h(x) t, x\in S, t\in \mathbb{R}^1, (\xi, t)\in \mathbb{R}^{n+1} \setminus 0\}. 
\end{equation} 
The rest of the operators are Fourier integral operators associated to two cleanly intersecting canonical relations, \cite{Melrose79}, \cite{Guillemin81}, \cite{Mendoza82}.
More detail on these classes is provided in section \ref{ipl}. The construction of an approximate Green's function to $L_{\gamma}$ is done in two parts. First, we construct an approximate inverse $B$ to $A_1^2+A_1^1$ in stages and iterations. This part is captured in Theorem \ref{thm-chi1inversion}. Next, we formulate a composition theorem, Theorem \ref{thm-composition}, which enables us to classify the compositions of $A_2$, $A_3^{\mu}$, and $A_3^{\mu+1}$ with $B$, respectively. It turns out that they all belong to the same residual class as $(A_1^2+A_1^1)\circ B - Id$, namely $I^{2\mu+\frac{3}{2}}(C_{\Sigma})$, which is smoothing on the scale of Sobolev spaces, as long as $\mu<-1$. Therefore, the constructed $B$ is indeed an approximate Green's function to $L_{\gamma}$. 

In section \ref{Ch-FIO} we analyse the composition of two FIOs, associated to $C_{\Sigma}$. This canonical relation $C_{\Sigma}$ can intersect the zero section both on the left (when $\xi=0$) and on the right (when $\eta=0$), but not both at the same time. The standard composition theorems assume canonical relations avoiding the zero section altogether, with an exception being \cite{Greenleaf01}. We formulate and prove Theorem \ref{thm-FIO-0sec}. The result of the composition is a sum of two FIOs, each associated to two canonical relations.

The paper is organized as follows. In section \ref{ipl} we review the $I^{p,l}$ classes, in section \ref{Ch-Green} we construct the Green's function to $L_{\gamma}$, and in section \ref{Ch-FIO} we analyze the composition of FIOs associated to $C_{\Sigma}$.

\section{$I^{p,l}$ Classes} \label{ipl}

Let $S^m$ denote the standard H\"ormander symbols of type $(1,0)$ of order $m \in \mathbb{R}$. 
A \emph{pure product type symbol of order $m$, $m'$} (\cite{Guillemin81}) is a $ C^{\infty}$ function $p(x, \xi, \theta)$ on $(\RR^n \times \RR^N \times \RR^M)$, such that for all compact sets $K$, and all multi-indices $\alpha, \beta, \gamma$, 
\begin{equation}
 \abs{\partial_x^{\gamma}\partial_{\xi}^{\alpha}\partial_{\theta}^{\beta}p(x,\xi, \theta)}\leq C_{\alpha, \beta, \gamma, K} \langle \xi \rangle^{m-\abs{\alpha}}\langle \theta \rangle^{m'-\abs{\beta}}, \quad x \in K, \xi \in \RR^N, \theta \in \RR^{M},  
\end{equation}
where $\langle \xi \rangle:=(1+\abs{\xi}^2)^{\frac{1}{2}}$ is known as the Japanese bracket of $\xi$.

A \emph{symbol-valued symbol of order $m$, $m'$} (\cite{Greenleaf90}) is a $ C^{\infty}$ function $p(x, \xi, \theta)$ on $(\RR^n \times \RR^N \times \RR^M)$, denoted  $p \in S^{m,m'}(\RR^n, (\RR^N \setminus 0), \RR^M)$, such that for all compact sets $K$, and all multi-indices $\alpha, \beta, \gamma$, 
\begin{equation}\label{eq-smm'}
 \abs{\partial_x^{\gamma}\partial_{\xi}^{\alpha}\partial_{\theta}^{\beta}p(x,\xi, \theta)}\leq C_{\alpha, \beta, \gamma, K} \langle \xi, \theta \rangle^{m-\abs{\alpha}}\langle \theta \rangle^{m'-\abs{\beta}}, \quad x \in K, \xi \in \RR^N, \theta \in \RR^{M}.  
\end{equation}

 We define a Fourier integral distribution associated with two cleanly intersecting Lagrangians, as per Mendoza \cite{Mendoza82}. Although this is not the original definition given by Melrose and Uhlmann \cite{Melrose79}, and Guillemin and Uhlmann \cite{Guillemin81}, it is equivalent. First we start with the definition of a multiphase function.  
 
 \begin{definition}\label{def-fid}
 Let $\Lambda_0$ and $\Lambda_1$ be two Lagrangians in $T^*\RR^n\setminus 0$, intersecting cleanly with excess $e=k$. Then for any $\lambda \in \Lambda_0 \cap \Lambda_1$, there exists a real-valued nondegenerate multiphase function $\phi(x,\theta, \sigma)$ on $\RR_x^n \times (\RR_{\theta}^{N-k} \setminus 0) \times \RR_{\sigma}^k$, homogeneous of degree 1 in $(\theta, \sigma)$, which microlocally parametrizes the pair of Lagrangians. In particular, the full function $\phi(x,\theta, \sigma)$ parameterizes the Lagrangian $\Lambda_1$, and $\phi_0(x, \theta) = \phi(x, \theta, 0)$ parameterizes the Lagrangian $\Lambda_0$.  
  \end{definition}
  
  \begin{definition}
  The Fourier integral distributions $I^{p,l}(X; \Lambda_0, \Lambda_1)$, associated with the two Lagrangians $\Lambda_0$ and $\Lambda_1$, are defined as all locally finite sums of expressions of the form:
  \begin{equation}
  \begin{split}
   & u(x) = \int_{\RR^N}e^{i\phi(x, \theta, \sigma)}a(x, \theta, \sigma) d\theta d\sigma,
   \quad a \in S^{M, M'}(\RR^n_x, \RR^{N-k}_{\theta}\setminus 0, \RR^k_{\sigma}),\\
   & \text{ where } 
   M=(p+l)-\frac{N-k}{2}+\frac{n}{4} \text{ and } M'=-l-\frac{k}{2},
   \end{split}
  \end{equation}
  over all possible multiphase functions parameterizing the pair of Lagrangians.
\end{definition}

There is an iterated regularity characterization, due to Melrose (see \cite{Greenleaf90}), of Fourier integral distributions associated to two Lagrangians, although the orders $p$ and $l$ are not evident from this characterization. 

\begin{theorem}\label{thm-itreg-2}
 A Fourier integral distribution $\displaystyle u \in \bigcup_{p,l\in\RR} I^{p,l}(X; \Lambda_0, \Lambda_1)$ if and only if there exists $s_0 \in \RR$ with $u\in H^{s_0}_{loc}(X)$, such that for all $M \in \mathbb{N}$, and all classical first order pseudodifferential operators $P_1, P_2, ... P_M \in \Psi^1_{cl}$, \emph{characteristic} for $\Lambda_0 \cup \Lambda_1$, i.e., $\sigma_{prin} (P_j)\equiv 0$ on $\Lambda_0 \cup \Lambda_1$, 
 \begin{equation} \label{eq-itreg-2}
 P_1P_2...P_Mu \in H^{s_0}_{loc}(X).
 \end{equation}
\end{theorem}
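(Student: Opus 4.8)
The statement is an iterated-regularity characterization of the class $\bigcup_{p,l} I^{p,l}(X;\Lambda_0,\Lambda_1)$, so I want to prove both implications. For the forward direction, I would start from the parametrized form $u(x)=\int e^{i\phi(x,\theta,\sigma)}a(x,\theta,\sigma)\,d\theta\,d\sigma$ with $a\in S^{M,M'}$. The key point is that any classical $P\in\Psi^1_{cl}$ characteristic for $\Lambda_0\cup\Lambda_1$, when applied under the integral sign, produces an amplitude obtained from $a$ by a combination of multiplication by $\sigma_{prin}(P)(x,\phi_x)$ (plus lower order terms) and differentiations in $x$. Because $\phi$ parametrizes $\Lambda_1$ and $\phi_0=\phi|_{\sigma=0}$ parametrizes $\Lambda_0$, the principal symbol of $P$ vanishes on both Lagrangians; expressing this vanishing in the parametrization, $\sigma_{prin}(P)(x,d_x\phi)$ vanishes on the critical set $\{d_\theta\phi=0\}$ and $\sigma_{prin}(P)(x,d_x\phi_0)$ vanishes on $\{d_\theta\phi_0=0,\ \sigma=0\}$. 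A division-with-remainder argument in the $\theta$ and $\sigma$ variables then shows that the new amplitude, after applying $P$, gains one order of decay in $\langle\theta,\sigma\rangle$ relative to the product structure — i.e. each $P_j$ lowers $M$ by one while leaving $M'$ controlled — so iterating $M$ times lands in a fixed $I^{p-M,l}\subset H^{s_0}_{loc}$ for the $s_0$ corresponding to the worst-case initial orders, and every $P_1\cdots P_M u$ stays in $H^{s_0}_{loc}$.

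For the converse, I would invoke the standard microlocal machinery: the hypothesis forces $WF(u)\subseteq\Lambda_0\cup\Lambda_1$ by the usual argument (if $(x_0,\xi_0)\notin\Lambda_0\cup\Lambda_1$, pick $P$ elliptic there and characteristic on $\Lambda_0\cup\Lambda_1$, then $P^M u\in H^{s_0}$ for all $M$ gives $C^\infty$ near $(x_0,\xi_0)$). Then one localizes near a point of $\Lambda_0\cap\Lambda_1$, away from it on $\Lambda_0\setminus\Lambda_1$, and away from it on $\Lambda_1\setminus\Lambda_0$. Away from the intersection, $u$ is a classical Lagrangian distribution and the iterated regularity pins down a symbol order, hence $u$ lies in some $I^p(\Lambda_0)$ or $I^p(\Lambda_1)$, which embed in the $I^{p,l}$ scale. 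Near the clean intersection, I would use a normal form for the pair $(\Lambda_0,\Lambda_1)$ (model case: $\Lambda_0$ the conormal of $\{x_1=0\}$ and $\Lambda_1$ the conormal of the origin, or the flowout model), transport $u$ to this model by an elliptic FIO, and there recognize that the iterated bounds under the model characteristic operators (essentially $x_1 D_{x_1}$-type and $D_{x'}$-type vector fields) are exactly equivalent, via a Mellin/Fourier analysis in $x_1$, to the product-symbol estimates defining $S^{M,M'}$ for some $M,M'$. Reading off $p$ and $l$ from $M,M'$ via $M=(p+l)-\frac{N-k}{2}+\frac{n}{4}$, $M'=-l-\frac{k}{2}$ completes it.

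The main obstacle is the converse near the clean intersection: one must show that finiteness of all the iterated Sobolev norms, for the full module of characteristic first-order operators, is not merely necessary but \emph{sufficient} to place $u$ in the two-Lagrangian class with some finite pair of orders. This requires a faithful normal form for a cleanly intersecting pair with excess $k$, an argument that the module generated by characteristic operators is finitely generated and its joint regularity spaces coincide with the product-symbol spaces (a theorem of Melrose's type, as cited via \cite{Greenleaf90}), and care that conjugating by the elliptic FIO intertwining the geometry with the model does not destroy the iterated-regularity hypothesis — which holds because such an FIO maps characteristic operators to characteristic operators modulo lower order. The forward direction and the wavefront-set localization are comparatively routine; I would present them concisely and concentrate the detailed work on the model-case equivalence.
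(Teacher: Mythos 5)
The paper does not prove this theorem: it is quoted as a known result of Melrose, with \cite{Greenleaf90} given as the reference, and the text explicitly treats it as background for the $I^{p,l}$ calculus. So there is no in-paper proof to compare yours against; I can only assess your outline on its own terms.

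Your overall architecture (forward direction by applying characteristic operators under the integral; converse by wavefront containment, splitting off $\Lambda_0\setminus\Lambda_1$ and $\Lambda_1\setminus\Lambda_0$, and a normal form at the clean intersection) matches the strategy in the literature. Two points need attention. First, in the forward direction your bookkeeping is off: a first-order operator $P$ characteristic for $\Lambda_0\cup\Lambda_1$ maps $I^{p,l}$ to $I^{p,l}$ --- the one order of vanishing exactly cancels the order $1$ of $P$, it does not produce a net gain --- so the iterates do not descend to $I^{p-M,l}$; they merely stay in a fixed $I^{p,l}\subset H^{s_0}_{loc}$, which is all that is needed. Moreover the division argument must be done with respect to the two-index structure: vanishing on $\Lambda_1$ controls the $\langle\theta,\sigma\rangle$ order while the additional vanishing on $\Lambda_0$ (i.e.\ at $\sigma=0$ on the critical set of $\phi_0$) is what controls the second index $M'$; your sketch conflates these. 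Second, and more seriously, the converse as you present it is a restatement of the theorem rather than a proof of it: the assertion that, in the model pair, finiteness of all iterated Sobolev norms under the characteristic module is \emph{equivalent} to the $S^{M,M'}$ product-symbol estimates is precisely Melrose's theorem, and you defer it rather than establish it. You would need to either carry out that Mellin/commutator analysis in the model or cite it; note also that the characterization only places $u$ in the union over all $(p,l)$ --- the paper itself remarks that $p$ and $l$ are not recoverable from it --- so your final step of ``reading off $p$ and $l$'' promises more than the statement (or the method) delivers.
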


Note the following property from Guillemin and Uhlmann{\cite{Guillemin81}}, Proposition 6.2, used later in Chapter \ref{Ch-Green}.
\begin{property}\label{prop-ipl-caps}
With $I^{p,l}=I^{p,l}(X; \Lambda_0, \Lambda_1)$ we have\\
 (i)$\quad \bigcap_{l} I^{p,l}=I^p(X, \Lambda_1)$\\
 (ii)$\quad \bigcap_{p} I^{p,l}=C_0^{\infty}(X)$.
\end{property}

The authors also derive the following inclusions.

\begin{property}\label{prop-inclusions}
Let $u \in I^{p,l}(X; \Lambda_0, \Lambda_1)$. Then:\\
(i)$\quad u\vert_{\Lambda_0\setminus \Lambda_1} \in I^{p+l}(X; \Lambda_0 \setminus \Lambda_1)$, and \\
(ii)$\quad u\vert_{\Lambda_1\setminus \Lambda_0} \in I^{p}(X; \Lambda_1 \setminus \Lambda_0)$.
\end{property}

The last property means that on each of the two Lagrangians, away from the intersection, these classes behave just like Fourier integral distributions, associated with a single Lagrangian. Therefore, each one has a well-defined principal symbol in the sense of H\"ormander. We will introduce a principal symbol in the sense of \cite{Guillemin81}, but first we need a few of their definitions. Their Definitions 5.1, 5.2, paraphrased with reindexing below, state what it means for a class of functions defined on $X\setminus Y$ to ``blow up like homogeneous functions of degree $r$'' at $Y$. First for $X=R^n$ and $Y=0$ these are defined as follows:
\begin{definition}
 Let $f$ be a smooth function on $\RR^n\setminus 0$. Then $f\in \Theta^l \Leftrightarrow \abs{D^{\alpha}f} \leq C_{\alpha}\abs{x}^{l-\abs{\alpha}}$ for all multi-indices, $\alpha$. We will say that $f$ \emph{has a singularity of order $r$ at $0$ if,} for every $N\in \mathbb{N}$, there exists a sequence of homogeneous functions, $f_{i}$, $0 \leq i \leq N$, on $\RR^n \setminus 0$, $f_i$ being homogeneous of degree $i+r$, such that $f-\sum_{i=0}^N f_i \in \Theta^{r+N+1} \cup C^{\infty}(\RR^n)$. 
\end{definition}
Now we describe the more general case, where $X=\RR^n$, and $Y$ is a submanifold of $\RR^n$ defined by the equations $x_{k+1}=...=x_n=0$. Let $y_1,...,y_k$ denote the first $k$ coordinates of $\RR^n$, and $t_1,...,t_{n-k}$ the remaining $n-k$ coordinates. 
\begin{definition}
 Let $f$ be a smooth function on $\RR^n\setminus Y$. We will say that $f$ \emph{has a singularity of order $r$ along $Y$ if,} for every $N\in \mathbb{N}$, there exists a sequence of smooth functions, $f_i(t, y)$, $0\leq i\leq N$, on $\RR^n \setminus Y$, each $f_i$ being homogeneous of degree $r+i$ in $t$, and a smooth function $g$, such that $f-g-\sum_{i=0}^N f_i \in \Theta^{r+N+1}$ as a function of $t$, uniformly in $y$. The space of all such $f$'s will be denoted by $R^r(X,Y)$.
\end{definition}
For a distribution $u \in I^{p,l}(X; \Lambda_0, \Lambda_1)$, the H\"ormander principal symbol on $\Lambda_0\setminus \Lambda_1$ is denoted by $\sigma_0$ and on $\Lambda_1\setminus \Lambda_0$ is denoted by $\sigma_1$. Let $k$ be the codimension of the intersection $\Sigma=\Lambda_0 \cap \Lambda_1$ in either Lagrangian. The following proposition, which we use in Sections \ref{Ch-Green} and \ref{Ch-FIO}, is a rephrased Proposition 5.4 \cite{Guillemin81}, in order to avoid irrelevant technicalities concerning half bundles and Maslov densities. It defines the order of blowup, as the intersection is approached:
\begin{proposition}
$\sigma_0 \in R^{l'}(\Lambda_0, \Sigma)$ and $\sigma_1 \in R^{-l'-k}(\Lambda_1, \Sigma)$, where $l'=l-k/2$.   
\end{proposition}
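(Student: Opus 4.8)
The plan is to reduce the statement to a local computation in a single multiphase parametrization and then read off the claimed blowup orders from the structure of the symbol. First I would fix $\lambda \in \Sigma = \Lambda_0\cap\Lambda_1$ and, by Definition~\ref{def-fid}, choose a nondegenerate multiphase function $\phi(x,\theta,\sigma)$ on $\RR^n_x\times(\RR^{N-k}_\theta\setminus 0)\times\RR^k_\sigma$ parametrizing the pair, so that $u(x)=\int e^{i\phi(x,\theta,\sigma)}a(x,\theta,\sigma)\,d\theta\,d\sigma$ with $a\in S^{M,M'}$, $M=(p+l)-\tfrac{N-k}{2}+\tfrac n4$, $M'=-l-\tfrac k2$. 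The key geometric fact I would use is that $\phi_0(x,\theta)=\phi(x,\theta,0)$ parametrizes $\Lambda_0$ and that, near $\lambda$, the fiber variable $\sigma$ plays the role of the transverse coordinate to $\Sigma$ inside $\Lambda_0$: the map $(x,\theta)\mapsto$ (base point, plus the $\sigma$-stationary-phase Hessian data) realizes a splitting in which $\sigma$ is dual to the $k$ normal directions to $\Sigma$ in $\Lambda_0$. So the restriction $\sigma_0 = u|_{\Lambda_0\setminus\Lambda_1}$ is, up to the usual half-density/Maslov factors which I am suppressing per the statement's convention, obtained by integrating out $\theta$ by stationary phase in the $\theta$ variables only, leaving an oscillatory integral in $\sigma$ alone against the amplitude $a$.

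Next I would carry out the bookkeeping of orders. After stationary phase in the $N-k$ variables $\theta$ (a nondegenerate critical point away from $\Lambda_1$, since there the two Lagrangians separate), the $\theta$-integration contributes a factor $\langle\sigma\rangle^{-(N-k)/2}$ to the effective amplitude together with the Maslov shift already absorbed into the index $M$; what remains is $\sigma_0 \sim \int e^{i\psi(x,\sigma)} b(x,\sigma)\,d\sigma$ with $b$ a symbol of order $M+M' = (p+l) - \tfrac{N-k}{2}+\tfrac n4 - l - \tfrac k2$ in $\sigma$ — after subtracting the $\tfrac{N-k}{2}$ from stationary phase one is integrating a symbol of order $(p+l) - (N-k) + \tfrac n4 - l - \tfrac k2 - \tfrac k2$ wait—here I must be careful to track the $\langle\xi,\theta\rangle$ versus $\langle\theta\rangle$ weights in (\ref{eq-smm'}): on $\Lambda_0\setminus\Lambda_1$ the variable $\xi$ is bounded while $\sigma\to 0$, so only the $\langle\sigma\rangle^{M'}=\langle\sigma\rangle^{-l-k/2}$ factor governs the behavior as $\Sigma$ is approached. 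The statement that $\sigma_0$ "has a singularity of order $l'$ along $\Sigma$" with $l'=l-k/2$ then amounts to: an oscillatory integral $\int e^{i\sigma\cdot r} c(\sigma)\,d\sigma$ over $\RR^k$ with $c$ a classical symbol of order $m$ produces a distribution conormal to $\{r=0\}$ with a singularity of order $-m-k$ in the sense of the second $\Theta$-definition; taking $m = M' = -l-k/2$ gives order $l+k/2-k = l-k/2 = l'$, which is exactly the claim for $\sigma_0$.

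For $\sigma_1$ I would argue symmetrically but now using the \emph{full} phase $\phi$: here $\theta$ is the fiber variable dual to the directions transverse to $\Lambda_1$, and one performs stationary phase in $\theta$ at the nondegenerate critical point defining $\Lambda_1$, with $\sigma$ now a smooth parameter; the H\"ormander principal symbol $\sigma_1$ inherits the $\langle\theta\rangle^{M'}$ behavior of $a$ integrated against the $\sigma$-directions, which near $\Sigma$ (where $\sigma$ ranges over a neighborhood of $0$ in $\RR^k$, but $\theta$ is large) contributes a blowup of order $M' + k = -l-k/2+k = -l+k/2$; but the symbol $a$ as a function on $\Lambda_1$ near $\Sigma$ additionally carries the $\langle\theta\rangle$-factor from having order $M$ in $(\xi,\theta)$ with $\xi$ now comparable to $\theta$ — matching this against the definition of $R^{-l'-k}(\Lambda_1,\Sigma)$ and using $l'=l-k/2$ gives $-l'-k=-l+k/2-k=-l-k/2$, so I would show the $\langle\theta\rangle^{-l-k/2}$ growth of the amplitude transverse to $\Sigma$ is exactly this order. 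The two computations are dual to each other under the $(p,l)\leftrightarrow(p+l,-l)$ symmetry that swaps the roles of $\Lambda_0$ and $\Lambda_1$, and I would present the second as a consequence of the first applied to the pair $(\Lambda_1,\Lambda_0)$.

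The main obstacle, and the step I expect to need the most care, is controlling the \emph{uniformity in the tangential variables} $y$ along $\Sigma$ in the $\Theta$-estimates: the definitions of $R^r(X,Y)$ demand that, after subtracting finitely many terms homogeneous in the transverse variable $t$, the remainder lies in $\Theta^{r+N+1}$ \emph{uniformly in $y$}, and the asymptotic expansion in homogeneous pieces must come from the classical (polyhomogeneous) expansion of the symbol $a$ combined with the stationary-phase expansion — one has to check that the error terms in stationary phase, which are themselves symbols but now in fewer variables, do not destroy the homogeneity structure and that the subtracted homogeneous terms $f_i$ are genuinely smooth in $y$ away from $Y$. This is essentially a careful rephrasing of the Guillemin–Uhlmann argument; I would organize it by first doing the model case $\Lambda_0 = N^*\{0\}$, $\Lambda_1 = $ a transverse fiber, where everything is explicit, and then transferring by an FIO conjugation that preserves both the $I^{p,l}$ class and the $R^r$ classes.
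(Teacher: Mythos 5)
The paper does not actually prove this proposition: it is stated as a rephrasing of Proposition 5.4 of Guillemin--Uhlmann and used as a citation, so there is no in-paper argument to compare against. Your sketch is a faithful reconstruction of the cited argument --- fix a multiphase parametrization, read the transverse blowup off the symbol-valued symbol estimates, with the $\sigma$-integral acting as a Fourier transform in the direction conormal to $\Sigma$ --- and your numerology for $\sigma_0$ is correct: a symbol of order $M'=-l-k/2$ in the $k$ transverse fiber variables Fourier-transforms to a singularity of order $-M'-k=l-k/2=l'$. The one genuinely muddled spot is your treatment of $\sigma_1$: you write that integrating ``against the $\sigma$-directions'' contributes a blowup of order $M'+k=-l+k/2$, which contradicts the target $-l'-k=-l-k/2$ that you then assert. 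The resolution is that for $\sigma_1$ there is no $\sigma$-integration at all --- the full phase $\phi$ parametrizes $\Lambda_1$ with all $N$ fiber variables, so the H\"ormander symbol on $\Lambda_1\setminus\Lambda_0$ is just the amplitude on the critical set, whose transverse behavior is $\langle\sigma\rangle^{M'}$ directly, giving order $M'=-l-k/2=-l'-k$ with no extra $+k$. Your alternative route via the exchange symmetry $(p,l)\mapsto(p+l,-l)$, which sends $l'$ to $-l-k/2=-l'-k$, is also valid and is the cleanest way to deduce the second assertion from the first, provided you are willing to invoke that symmetry of the $I^{p,l}$ classes (itself a theorem of Guillemin--Uhlmann rather than an immediate consequence of the definition). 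Your closing remarks about uniformity in the tangential variables correctly identify where the technical work in the cited proof lies.
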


Guillemin and Uhlmann define a map $\alpha$ from the principal symbol to the leading term $f_0(t,y)$, which is their notion of principal symbol $\sigma_{prin}$. They deduce that the leading terms of the two symbols are related by the Fourier transform, namely $\alpha(\sigma_1)=F.T.\, \alpha(\sigma_0)$.

We continue with the definition of Fourier integral operators, associated with two canonical relations.

\begin{definition} \label{def-FIO-ipl}
 Let $X$ be a smooth manifold of dimension $n_x$, $Y$ be a smooth manifold of dimension $n_y$. Let $C_0$ be a canonical relation, with $C_0=\Lambda_0' \subset T^* \RR^{n_x+n_y}\setminus 0$, where $\Lambda_0$ is a smooth conic Lagrangian, and let $C_1$ be a canonical relation, with $C_1=\Lambda_1' \subset T^* \RR^{n_x+n_y}\setminus 0$, where $\Lambda_1$ is a smooth conic Lagrangian, and assume $\Lambda_0$ and $\Lambda_1$ intersect cleanly in codimension $k$.  
We say the operator $A: \mathcal D(Y)$ to $\mathcal D'(X)$ is in $I^{p,l}(X, Y; C_0, C_1)$ if its kernel
$K_A \in I^{p,l}(X\times Y; C_0', C_1')$.
\end{definition}

Now that there is a notion of the principal symbol on either Lagrangian away from the intersection, we state a property, which we use over and over again in the construction of the Green's function in Chapter \ref{Ch-Green}.
\begin{property}\label{prop-ipl-residual}
 Let $A \in I^{p,l}(X, Y; C_0, C_1)$ and $\sigma_{prin}(A)=0$.
 
 Then $A \in I^{p-1,l}(X, Y; C_0, C_1)+I^{p,l-1}(X, Y; C_0, C_1)$.
\end{property}

\section{Green's Function Construction}\label{Ch-Green}
In this section, we construct a Green's function for $L_{\gamma}=\div (\gamma(x) \nabla)$. The Green's function exists by reason of abstract functional analysis (\cite{Littman63}, Theorem 2.3), but to be useful for the inverse problem, we need a fairly concrete realization. We assume $\gamma$ can be written as a conormal distribution of order $\mu<-1$ for $S$, where $S$ is a codimension 1 submanifold of $\mathbb{R}^n$ with defining function $h(x)=0$. This means, we can write:
\begin{equation} \label{eq-gamma}
 \gamma(y)=\int e^{i h(y) \cdot \theta} a_{\mu}(y, \theta) d\theta, \quad a_{\mu}\in S^{\mu}(\mathbb{R}^n \times \mathbb{R}^1\setminus 0)
\end{equation}

Define the operator $\dot{\nabla}_{\gamma} \phi := \div (\gamma(x)\phi(x))$ mapping vector-valued functions to scalar-valued ones. Then:
\begin{equation} \label{eq-nabladot}
 \dot{\nabla}_{\gamma} \phi(x) = \int e^{i(x-y)\cdot \xi} \gamma(y) (i\xi)\cdot \phi(y) \, dy \, d\xi 
\end{equation}
and
\begin{equation} \label{eq-lgammaf}
  \begin{array}{ll}
 L_{\gamma} f(x) = \dot{\nabla}_{\gamma} (\nabla f (x)) &= \int e^{i(x-y)\cdot \xi} \gamma(y) (i\xi)\cdot \nabla f (y) \, dy \, d\xi\\
							  &= \int e^{i[(x-y)\cdot \xi+(y-z)\cdot \eta]} \gamma(y) (i\xi)\cdot (i\eta) f (z) \,dz \, dy \, d\xi \,d\eta.
  \end{array}
 \end{equation}
 Substituting (\ref{eq-gamma}) into (\ref{eq-lgammaf}) leads to
  \begin{equation} \label{eq-lgammaflong}
    L_{\gamma} f(x) = -\int e^{i[(x-y)\cdot \xi+(y-z) \cdot \eta + h(y)\theta]} (\xi \cdot \eta) a_{\mu}(y, \theta) f(z) \, dz \, dy \, d\xi \, d\eta \, d\theta. 
  \end{equation}
The kernel of $L_{\gamma}$ can be represented as:
 \begin{equation} \label{eq-klgammalong}
   K_{L_{\gamma}} (x,z) = \int e^{i[(x-y)\cdot \xi+(y-z) \cdot \eta + h(y)\theta]} (\xi \cdot \eta) a_{\mu}(y, \theta) \, dy \, d\xi \, d\eta \, d\theta.
 \end{equation}

 Now, we reduce the number of variables in the integration via stationary phase in $\eta$ and $y$. The phase function is:
 \begin{equation} \label{eq-phi1}
  \Phi(x,z,\xi, \theta; \eta, y) = (x-y)\cdot \xi+(y-z) \cdot \eta + h(y)\theta
 \end{equation}
 
 Since, 
 \begin{equation*}
  \begin{array}{l}
   \Phi_{\eta}=y-z\\
   \Phi_{y}=-\xi+\eta+\nabla h(y) \theta,
  \end{array}
 \end{equation*}
the unique critical point is $y=z$ and $\eta = \xi - \nabla h(y) \theta$.
The Hessian and inverse Hessian of $\Phi$ are respectively, 
$$\cal{H}=\begin{bmatrix} 0 & I_n \\ I_n & -\theta H(y) \end{bmatrix} \quad \text{and} \quad  \cal{H}^{\text{-1}}=\begin{bmatrix} -\theta H(y) & I_n \\ I_n & 0 \end{bmatrix},$$
where $H$ is the Hessian of the defining function $h$. Since $H$ is nonsingular, the critical point is nondegenerate.

We can substitute the critical point into (\ref{eq-klgammalong}), provided there is a valid asymptotic expansion, which we justify below, and obtain: 
\begin{equation} \label{eq-klgamma}
  K_{L_{\gamma}} (x,z) \sim \int e^{i[(x-z)\cdot \xi+ h(z)\theta]} (\xi \cdot (\xi - \nabla h(z)\theta)) a_{\mu}(z; \theta)  \, d\xi \, d\theta.
\end{equation}
The amplitude of the integral is (\ref{eq-klgammalong}) is
\begin{equation}\label{eq-amp-a}
 a(\xi, \eta, y, \theta)=(\xi \cdot \eta) a_{\mu}(y; \theta)
\end{equation}
and its behavior at the critical point can be estimated, as a sum of two pure-product type symbols with orders, given by the exponents in
\begin{equation} \label{eq-a-crit}
\begin{array}{ll}
 \abs{a}_{\vert{\text{crit.pt.}}} &= \abs{\xi \cdot (\xi - \nabla h(z)\theta) a_{\mu}(z; \theta)} \\ 
 &\leq \abs{\xi \cdot \xi a_{\mu}(z; \theta)} + \abs{\xi \cdot \nabla h(z)\theta a_{\mu}(z; \theta)} \\
 &\lesssim \abs{\xi}^2 \langle \theta \rangle^{\mu}+ \abs{\xi}\langle \theta \rangle^{\mu+1} \\
 &\lesssim \langle\xi\rangle^2 \langle \theta \rangle^{\mu} + \langle\xi\rangle\langle \theta \rangle^{\mu+1}
 \end{array}
\end{equation}
In order to justify the asymptotic expansion we need to analyze the action of the operator 
\begin{equation} \label{eq-lop}
L = \nabla_{\eta, y} \cdot \mathcal{H}^{-1} \nabla_{\eta, y} = -\theta \sum_{i,j=1}^n H_{ij}(y)\frac{\partial^2}{\partial \eta_i\partial \eta_j}+2\sum_{i=1}^n \frac{\partial^2}{\partial \eta_i\partial y_i}
\end{equation}
on the amplitude $a$ and evaluate at the critical point to ensure decay. We put an upper bound on the size of $\abs{La}$ at the critical point:
\begin{equation} \label{eq-a-deriv-crit}
 \abs{La}_{\vert\text{crit.pt.}}\lesssim \sum_{i=1}^n \abs{\xi_i} \langle \theta \rangle^{\mu} \lesssim \langle \xi \rangle \langle \theta \rangle^{\mu}.
\end{equation}
Expression (\ref{eq-a-deriv-crit}) shows decay relative to either term in (\ref{eq-a-crit}), which justifies the stationary phase. Moreover, (\ref{eq-a-crit}) shows that the amplitude is a sum of two pure product type symbols. Using this fact, and switching notation to write $\sigma$, instead of $\xi$, and $\tau$, instead of $\theta$, we rewrite (\ref{eq-klgamma}) as 
\begin{equation} \label{eq-klgamma-sumpureproduct}
 K_{L_{\gamma}} (x,z) \sim \int e^{i[(x-z)\cdot \sigma+ h(z)\tau]} (a_2(\sigma)a_{\mu}(z; \tau)+a_1(\sigma)a_{\mu+1}(z; \tau))\, d\sigma \, d\tau.
\end{equation}

To continue with our analysis, we will introduce a partition of unity, which will allow us to look at the amplitude as a sum of four symbol-valued symbols and a standard symbol. 

Introduce the following open cover of $S^1$ in $\abs{\sigma}$, $\abs{\tau}$ space:
\begin{equation} \label{eq-opencover-u}
\begin{split}
\Big\{\{\frac{\abs{\tau}}{\abs{\sigma, \tau}}<\delta_2\}, \{\delta_1<\frac{\abs{\tau}}{\abs{\sigma, \tau}}<\delta_4\}, \{\delta_3<\frac{\abs{\tau}}{\abs{\sigma, \tau}}\} \Big\}, \\
  \text{for }0<\delta_1<\delta_2<\frac{1}{\sqrt{5}}<\frac{2}{\sqrt{5}}<\delta_3<\delta_4<1.
\end{split}
 \end{equation}

There exists a smooth partition of unity, subordinate to this cover with 
\begin{equation}\label{eq-partition-chi}
 \chi_1+\chi_2+\chi_3\equiv 1 \quad \text{on } S^1.
\end{equation}
This partition of unity can be extended to all of $\mathbb{R}^2 \setminus 0$, demanding it is homogeneous of degree zero, and then to all of $\mathbb{R}^{n+1}\setminus 0$, since $\sigma \in \mathbb{R}^n$ and $\tau \in \mathbb{R}^1$.
\begin{equation} \label{eq-klgamma-chi}
\begin{split}
&K_{L_{\gamma}} (x,z) \\
&\sim \int e^{i[(x-z)\cdot \sigma+ h(z)\tau]}(\chi_1+\chi_2+\chi_3)(a_2(\sigma)a_{\mu}(z; \tau)+a_1(\sigma)a_{\mu+1}(z; \tau))\, d\sigma \, d\tau \\ 
&=\int e^{i\phi} \chi_1 a + \int e^{i\phi} \chi_2 a + \int e^{i\phi} \chi_3 a
\end{split}
\end{equation}

In order to compare directly the size of $\abs{\sigma}$ and $\abs{\tau}$ on the support of each $\chi_i$, we introduce the constants $c_i=\frac{\delta_i}{\sqrt{1-\delta_i^2}}$, for 
$i \in {1,2,3,4}$. Then it is easily seen that $c_1<c_2<0.5<2<c_3<c_4<\infty$.

On the support of $\chi_1$, $\abs{\tau}<c_2\abs{\sigma}$, which implies that the amplitude can be treated as the sum of two symbol-valued symbols, which we can write as:
\begin{equation} \label{eq-symb-chi1}
 a_{2,\mu}(z; \sigma, \tau) \in S(\mathbb{R}_z^n; \mathbb{R}_{\sigma}^n \setminus 0, \mathbb{R}_{\tau}^1)\quad \text{and} \quad 
 a_{1,\mu+1}(z; \sigma, \tau) \in S(\mathbb{R}_z^n; \mathbb{R}_{\sigma}^n \setminus 0, \mathbb{R}_{\tau}^1).
 \end{equation}
The integrals associated with these symbols will be written as:
\begin{equation} \label{eq-A12}
K_{A_1^2}(x,z)=\int e^{i[(x-z)\cdot \sigma + h(z) \cdot \tau]}a_{2,\mu}(z; \sigma, \tau)\, d\sigma \,d\tau \in I^{\mu + \frac{5}{2}, -\mu-\frac{1}{2}}(\Delta', C_{\Sigma}'), \abs{\tau}<c_2\abs{\sigma},
 \end{equation}
\begin{equation} \label{eq-A11}
 K_{A_1^1}(x,z)=\int e^{i[(x-z)\cdot \sigma + h(z) \cdot \tau]}a_{1,\mu+1}(z; \sigma, \tau)\, d\sigma \,d\tau \in I^{\mu + \frac{5}{2}, -\mu-\frac{3}{2}}(\Delta', C_{\Sigma}'), \abs{\tau}<c_2\abs{\sigma},
\end{equation}
where we use the standard notation for $I^{p,l}$ classes, associated with two cleanly intersecting Lagrangians. In particular, the two Lagrangians correspond to the canonical relations $\Delta$ and $C_{\Sigma}$, where $\Delta$ is the diagonal and $C_{\Sigma}$ is the flowout of $T^*\RR^n\vert_{S}$ by the Hamiltonian vector field $H_h=-\nabla h \cdot \frac{\partial}{\partial \xi}$, namely,
\begin{equation*}
 \Delta=\{(x, \xi, y, \eta) \vert x=y, \xi=\eta\in \mathbb{R}^n \setminus 0\}, 
\end{equation*}
\begin{equation*}
C_{\Sigma}=\{(x, \xi, y, \eta) \vert x=y, \eta=\xi-\nabla h(x) t, x\in S, t\in \mathbb{R}^1, (\xi, t)\in \mathbb{R}^{n+1} \setminus 0\}. 
\end{equation*}

On the support of $\chi_2$, $c_1\abs{\sigma}<\abs{\tau}<c_4\abs{\sigma}$, which means that $\abs{\sigma}$ and $\abs{\tau}$ are comparable, i.e., $\abs{\sigma}\sim \abs{\tau}$ and the amplitude $a$ can be treated as a standard symbol in all $\mathbb{R}^{n+1}$ variables, namely,
\begin{equation}\label{eq-symb-chi2}
 a \in S(\mathbb{R}_z^n; R_{\sigma, \tau}^{n+1}\setminus 0).
\end{equation}
The integral will be denoted as:
\begin{equation} \label{eq-A2}
 K_{A_2}(x,z)=\int e^{i[(x-z)\cdot \sigma + h(z) \cdot \tau]}a_{\mu+2}(z; \langle \sigma, \tau \rangle)\, d\sigma \,d\tau \in I^{\mu + \frac{5}{2}}(C_{\Sigma}'), c_1\abs{\sigma}<\abs{\tau}<c_4\abs{\sigma}.
\end{equation}

On the support of $\chi_3$, $c_3\abs{\sigma}<\abs{\tau}$, which implies that the amplitude can be treated as the sum of two symbol-valued symbols, which can be written as:
\begin{equation} \label{eq-symb-chi3}
 a_{\mu, 2}(z; \sigma, \tau) \in S(\mathbb{R}_z^n; \mathbb{R}_{\tau}^1\setminus 0, \mathbb{R}_{\sigma}^n)\quad \text{and} \quad 
 a_{\mu+1, 1}(z; \sigma, \tau) \in S(\mathbb{R}_z^n; \mathbb{R}_{\tau}^1\setminus 0, \mathbb{R}_{\sigma}^n).
 \end{equation}
 The integrals associated to these symbols will be denoted as:
\begin{equation} \label{eq-A3mu}
 K_{A_3^{\mu}}(x,z)=\int e^{i[(x-z)\cdot \sigma + h(z) \cdot \tau]}a_{\mu, 2}(z; \tau, \sigma)\, d\sigma \,d\tau \in I^{\mu + \frac{5}{2}, -2-\frac{n}{2}}(C_0', C_{\Sigma}'), c_3\abs{\sigma}<\abs{\tau},
\end{equation}
\begin{equation} \label{eq-A3mu1}
 K_{A_3^{\mu+1}}(x,z)=\int e^{i[(x-z)\cdot \sigma + h(z) \cdot \tau]}a_{\mu+1,1}(z; \tau, \sigma)\, d\sigma \,d\tau \in I^{\mu + \frac{5}{2}, -1-\frac{n}{2}}(C_0', C_{\Sigma}'), c_3\abs{\sigma}<\abs{\tau}.
\end{equation}
Here,
\begin{equation*}
C_0=\{(x, 0, y, \nabla h(x) t) \vert x\in \mathbb{R}^n, y\in S, t\in \mathbb{R}^1\setminus 0\}. 
\end{equation*}
Now, we are ready to begin the construction of a Green's function to $K_{L_{\gamma}}$. Our first focus will be on approximately inverting $A_1^2+A_1^1$ in stages.

\bigskip
First, we will make the following remarks. We say that a canonical relation $C={(x,\xi, y, \eta)}$ \emph{has a $0$-section problem on the left} if $\xi$ can take the value $0$, and a $0$-section problem \emph{on the right} if $\eta$ can take the value $0$. By definition of a canonical relation, both cannot be zero at the same time. For $C_{\Sigma}$ in general, we only know that $(\xi,t)\neq ({\bf 0},0)$, but that does not prevent either $0$-section from occurring. A $0$-section problem on the left occurs if $\xi=0, t\neq 0$ and a $0$-section problem on the right occurs if $\xi \neq 0$, but $\xi$ is collinear with $\nabla h(x)$, $\xi=t\nabla h(x)$.

However, on the support of $\chi_1$, there is no $0$-section problem. This is because $\abs{\tau}<c_2\abs{\sigma}$ implies $\abs{t}<c_2\abs{\xi}$. This means $\abs{\xi} \neq 0$, otherwise $\abs{t}=0$ as well, and $(\xi, t)=0$, a violation. This means no $0$-section problem on the left. On the other hand, assuming $\abs{\nabla h(x)}\equiv 1$ for all $x$ on $S$, 
$\abs{\eta}=\abs{\xi - \nabla h(x) t} \geq \abs{\xi}-\abs{t} > (1-c_2)\abs{\xi}>0$, since $c_2<\frac{1}{2}$. Therefore, there is no $0$-section problem on the right. 

Also, notice that composition of two (or an arbitrary number of) operators with canonical relation $C_{\Sigma}$ on $supp(\chi_1)$ leads to no $0$-section problem, because $(x, \xi, y, \eta) \in C_{\Sigma} \circ C_{\Sigma}$ on $\chi_1$ if and only if there exists  $(z,\zeta)$, such that $(x, \xi, z, \zeta) \in C_{\Sigma}$ and $(z, \zeta, y, \eta) \in C_{\Sigma}$, but this means $x=z=y$ and $\abs{\eta}>(1-c_2)\abs{\zeta}>(1-c_2)^2\abs{\xi}>0$. Now that the zero-section has been avoided, the  composition of operators in $I^{p,l}(\Delta, C_{\Sigma})$ classes can be accomplished with the use of a result in \cite{Antoniano85}, namely:
\begin{theorem}\label{thm-Antoniano}
$I^{p,l} (\Delta, C_{\Sigma}) \circ I^{p',l'} (\Delta, C_{\Sigma}) \subset I^{p+p'+\frac{1}{2},l+l'-\frac{1}{2}} (\Delta, C_{\Sigma}).$
\end{theorem}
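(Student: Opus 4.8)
The plan is to prove this by reducing to a stationary-phase computation at the level of Schwartz kernels, and then pinning down the two indices by restricting to each Lagrangian away from the intersection. Write $A\in I^{p,l}(\Delta,C_\Sigma)$ and $A'\in I^{p',l'}(\Delta,C_\Sigma)$, so that the composite has kernel $K_{A\circ A'}(x,y)=\int K_A(x,z)\,K_{A'}(z,y)\,dz$. By Definition \ref{def-fid} and the definition of the $I^{p,l}$ classes, each factor is, microlocally, an oscillatory integral with a multiphase function in $N-k$ ``large'' and $k$ ``excess'' frequency variables; here the intersection $\Sigma=\Delta\cap C_\Sigma=\{t=0\}$ has codimension $k=1$ in each Lagrangian, and in the model form of \eqref{eq-A12}--\eqref{eq-A11} the phases are $(x-z)\cdot\sigma+h(z)\tau$ with a symbol-valued symbol in $S^{M,M'}(\RR^n_z;\RR^n_\sigma\setminus 0,\RR^1_\tau)$. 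Substituting both parametrizations makes $K_{A\circ A'}$ a single oscillatory integral in $z$ and the combined frequency variables. Throughout one may use the honest clean-intersection FIO calculus: by the remark preceding the theorem, on $\mathrm{supp}\,\chi_1$ neither factor nor the composite meets the zero section.

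First I would eliminate the intermediate spatial variable $z$ by stationary phase in $z$ together with a dual frequency variable; the relevant Hessian is nondegenerate precisely because $\Delta$ and $C_\Sigma$ intersect cleanly, and the surviving phase is again a multiphase function parametrizing $(\Delta,C_\Sigma)$, now from the $X$--$Y$ side. This mirrors the set-theoretic identities $\Delta\circ\Delta=\Delta$, $\Delta\circ C_\Sigma=C_\Sigma\circ\Delta=C_\Sigma$, and $C_\Sigma\circ C_\Sigma=C_\Sigma$ already recorded in the excerpt. The key subtlety, and the source of the index shifts, is that $C_\Sigma\circ C_\Sigma=C_\Sigma$ is \emph{not} transversal: over each point of $C_\Sigma$ the fibre of intermediate covectors $(z,\zeta)$ with $(x,\xi,z,\zeta)\in C_\Sigma$ and $(z,\zeta,y,\eta)\in C_\Sigma$ is one-dimensional (one may split the flowout parameter as $u=s+(u-s)$ with $s$ free), so the composition is clean with excess $e=1$, and a clean composition with excess $e$ gains $e/2$ in leading order. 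That is where the $+\frac{1}{2}$ in the $p$-index comes from.

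Rather than extract the second index from the same computation, I would fix it by an independent symbol argument, which is cleaner. Writing $p'',l''$ for the indices of the composite: restricting to $\Delta\setminus C_\Sigma$, Property \ref{prop-inclusions}(i) identifies elements of $I^{p,l}(\Delta,C_\Sigma)$ with ordinary pseudodifferential operators of order $p+l$, and $\Psi$DO composition is order-additive with no excess, so $p''+l''=(p+l)+(p'+l')$; restricting to $C_\Sigma\setminus\Delta$, Property \ref{prop-inclusions}(ii) identifies them with FIOs of order $p$ associated to $C_\Sigma$, and the clean composition with excess $1$ gives $p''=p+p'+\frac{1}{2}$. The two relations force $l''=l+l'-\frac{1}{2}$. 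It then remains to verify that the amplitude produced by the stationary-phase reduction is genuinely a symbol-valued symbol of the bi-order prescribed by the $I^{p,l}$ definition for these indices and the new number of phase variables, which gives $A\circ A'\in I^{p+p'+\frac{1}{2},\,l+l'-\frac{1}{2}}(\Delta,C_\Sigma)$. (Alternatively, membership in $\bigcup_{p,l}I^{p,l}(\Delta,C_\Sigma)$ can be obtained from Melrose's iterated-regularity criterion, Theorem \ref{thm-itreg-2}: a base Sobolev estimate for $A\circ A'$ is immediate from the mapping properties of the factors, and one pushes each first-order operator characteristic on $\Delta\cup C_\Sigma$ onto the left factor $A$ modulo commutators, using that $C_\Sigma$ is a flowout so that the characteristic ideal is closed under Poisson bracket; the indices are then read off as above.)

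The hard part is the clean stationary-phase reduction itself: since the critical set carries a one-dimensional fibre, the $z$-integration is a degenerate stationary phase, and it must be carried out uniformly over all frequency directions in the conic region $\abs{\tau}<c_2\abs{\sigma}$, with the leftover amplitude --- after integrating out the excess direction --- shown to satisfy the symbol-valued symbol estimates \eqref{eq-smm'} with the asserted orders; equivalently, one must control the rate at which the composite blows up as $\Sigma$ is approached, in the sense of the Guillemin--Uhlmann $R^r$ spaces. Once these uniform estimates are in hand the order bookkeeping above is automatic, and the theorem follows; this is, in effect, the content of the functional calculus of \cite{Antoniano85}.
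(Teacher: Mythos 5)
First, a point of reference: the paper does not prove this statement at all --- Theorem \ref{thm-Antoniano} is quoted from Antoniano--Uhlmann \cite{Antoniano85} and used as a black box, so there is no in-paper argument to compare yours against. Your outline is the standard route to such a flowout composition calculus (put both kernels in the model form with phase $(x-z)\cdot\sigma+h(z)\tau$, eliminate $z$ and one block of frequency variables by stationary phase, observe that the surviving one-dimensional frequency integration is a convolution in the small variable, and read off the bi-order), your identification of $C_\Sigma\circ C_\Sigma=C_\Sigma$ as clean with excess $1$ is correct, and the bookkeeping $p''+l''=(p+l)+(p'+l')$, $p''=p+p'+\tfrac12$, hence $l''=l+l'-\tfrac12$, is consistent.

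As a proof, however, the proposal has two genuine gaps. (1) Your shortcut for pinning down $p''$ --- ``restrict to $C_\Sigma\setminus\Delta$ and use the excess-one clean composition'' --- is not legitimate on its own. Since $\Delta\circ C_\Sigma=C_\Sigma\circ\Delta=C_\Sigma$ as sets, wavefront contributions to $C_\Sigma\setminus\Delta$ in the composite also arise from pairs in which one factor sits on $\Delta$ over $S$, i.e.\ exactly on $\Sigma=\Delta\cap C_\Sigma$, where by Proposition 2.12 its symbol blows up like an element of $R^{l-1/2}$. The order of the composite on $C_\Sigma\setminus\Delta$ is therefore governed by these blowup rates and not by the orders $p+l$ and $p'$ of the factors away from $\Sigma$; it cannot be read off from the $C_\Sigma\circ C_\Sigma$ piece alone. (The analogous restriction to $\Delta\setminus C_\Sigma$ is fine, because there the intermediate point lies off $S$.) (2) The entire analytic content of the theorem is the uniform estimate, in the sense of \eqref{eq-smm'} on the cone $\abs{\tilde\tau}<c_2\abs{\sigma}$, for the convolved amplitude $c(z;\sigma,\tilde\tau)=\int a(z;\sigma,\tau)\,a'(z;\sigma-\tilde\tau\nabla h(z),\tilde\tau-\tau)\,d\tau$. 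This one-dimensional convolution in the small variable requires a case analysis over the regions $\abs{\tau}\ll\abs{\tilde\tau}$, $\abs{\tilde\tau-\tau}\ll\abs{\tilde\tau}$, $\abs{\tau}\gtrsim\abs{\tilde\tau}$ --- precisely the kind of work the paper carries out in Lemma \ref{lemma-a3} for the zero-section composition --- and it is delicate exactly in the regime $M',\tilde M'<-1$ relevant here, where the naive bound $\langle\tilde\tau\rangle^{M'+\tilde M'+1}$ is not what the convolution produces and one must track the separate pieces to land in the asserted class. You name this step as ``the hard part'' and then defer it to the functional calculus of \cite{Antoniano85}, which is the very source of the theorem; that makes the proposal circular where it matters. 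Either carry out the convolution estimates (modeled on Lemma \ref{lemma-a3}), or simply cite \cite{Antoniano85} as the paper does.
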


In order to invert $(A_1^2+A_1^1)$, we will build a sequence of operators $B_i^j \in I^{\mu-i-\frac{1}{2},-\mu-j+\frac{1}{2}}(\Delta, C_{\Sigma})$ with the following kernel representations:
\begin{equation}
 K_{B_i^j}(z,y)=\int e^{i[(z-y)\cdot \tilde\sigma-h(z)\tilde\tau]}b_{-i-j, \mu+j-1}(z; \tilde\sigma, \tilde\tau) \, d\tilde\sigma \, d\tilde\tau, \quad \abs{\tilde \tau}<c_2\abs{\tilde \sigma}.
\end{equation}

We will prove the following theorem, which will lead to the construction of the right inverse $B$ to $(A_1^2+A_1^1)$, and consequently, to the operator $L_{\gamma}$.
\begin{theorem} \label{thm-chi1inversion}
For the operators 
$$A_1^2 \in I^{\mu+\frac{5}{2}, -\mu-\frac{1}{2}}(\Delta, C_{\Sigma}),$$ and 
$$A_1^1 \in I^{\mu+\frac{5}{2}, -\mu-\frac{3}{2}}(\Delta, C_{\Sigma}),$$ supported close to the diagonal, there exists an operator $B \in I^{\mu-\frac{3}{2}, -\mu-\frac{1}{2}}(\Delta, C_{\Sigma})$, and operators $B_i \in  I^{\mu-i-\frac{1}{2}, -\mu-\frac{1}{2}}(\Delta, C_{\Sigma}), i\in \mathbb{N}$, such that
\begin{equation}
  B \sim \sum_{i=1}^{\infty} B_i, 
\end{equation}
and each
\begin{equation}
  B_i \sim \sum_{j=1}^{\infty} B_i^j, j \in \mathbb{N},
\end{equation}
where each $B_i^j \in I^{\mu-i-\frac{1}{2}, -\mu-j+\frac{1}{2}}(\Delta, C_{\Sigma})$,
such that the following identity always holds:
\begin{equation}
(A_1^2+A_1^1) \circ (\sum_{i=1}^M B_i+\sum_{j=1}^N B_{M+1}^j)-Id=\sum_{j=1}^N E_{M+1}^{j,1}+(E_{M+1}^{N,2}+E_{M+1}^{N,3}),
\end{equation}
modulo an operator, which is smoothing on the scale of Sobolev spaces, where 
\begin{equation}
E_i^{j, 1} \in  I^{2\mu-i+\frac{3}{2}, -2\mu-j-\frac{1}{2}}(\Delta, C_{\Sigma})
\end{equation}
and 
\begin{equation}
E_i^{j, 2}, E_i^{j, 3} \in I^{2\mu-i+\frac{5}{2}, -2\mu-j-\frac{3}{2}}(\Delta, C_{\Sigma})
\end{equation}
are error terms. 
The operator $B$ is a right pseudoinverse of $(A_1^2+A_1^1)$, in the sense that
\begin{equation}
(A_1^2+A_1^1) \circ B -Id \in I^{2\mu+\frac{3}{2}}(C_{\Sigma}). 
\end{equation}
\end{theorem}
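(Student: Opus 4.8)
The plan is to build $B$ by a double iteration: an outer iteration in $i$ that gains one order on the $\Delta$-symbol at each step, and an inner iteration in $j$ that gains one order on the $C_\Sigma$-symbol. I would start with a zeroth-order parametrix construction: since $A_1^2$ has $\sigma_{prin}$ on $\Delta$ equal to (essentially) $|\sigma|^2$ times the leading symbol of $\gamma$, which is elliptic on $\Delta\setminus C_\Sigma$ away from the zero section (we are on $\mathrm{supp}(\chi_1)$, so no $0$-section problem by the remarks preceding the theorem), I can invert the leading $\Delta$-symbol to produce $B_1^1 \in I^{\mu-\frac32, -\mu-\frac12}(\Delta, C_\Sigma)$ with $(A_1^2+A_1^1)\circ B_1^1 - Id$ having vanishing principal symbol on $\Delta\setminus C_\Sigma$. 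By Theorem~\ref{thm-Antoniano} the composition lands in $I^{2\mu+\frac32, -2\mu-\frac12}(\Delta, C_\Sigma)$, and by Property~\ref{prop-ipl-residual} the vanishing of $\sigma_{prin}$ drops this into $I^{2\mu+\frac12,-2\mu-\frac12} + I^{2\mu+\frac32,-2\mu-\frac32}$; the first summand is the $E^{1,1}$-type error we keep iterating on in $j$, the second is the pair $E^{1,2}, E^{1,3}$ that we defer. I would check the order bookkeeping matches the displayed exponents: composing $A_1^2 \in I^{\mu+\frac52,-\mu-\frac12}$ with $B_i^j \in I^{\mu-i-\frac12,-\mu-j+\frac12}$ gives, via Theorem~\ref{thm-Antoniano}, $I^{2\mu-i+\frac52+\frac12, -i-j+\frac12-\frac12} = I^{2\mu-i+3, -i-j}$ — so I'd need to track how the stated $E_i^{j,k}$ orders arise, presumably after one application of Property~\ref{prop-ipl-residual} to kill the symbol; the arithmetic with $A_1^1$ (whose $l$-order is one lower) must also be folded in, and the claim is that its contribution is absorbed into the same error classes.

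Next I would set up the inner recursion. Given that after $N$ inner steps the error is $\sum_{j=1}^N E_{M+1}^{j,1} + (E_{M+1}^{N,2}+E_{M+1}^{N,3})$, I solve away the leading $C_\Sigma$-symbol of the top $E$-term: the principal symbol of an $I^{p,l}(\Delta,C_\Sigma)$ element restricted to $C_\Sigma\setminus\Delta$ lives in $I^{p}(C_\Sigma\setminus\Delta)$ by Property~\ref{prop-inclusions}(ii) and blows up along $\Sigma$ like a homogeneous function of the order given by the Guillemin–Uhlmann Proposition, so I can invert it — on $\mathrm{supp}(\chi_1)$, where $C_\Sigma$ also avoids both $0$-sections — to produce the next $B_{M+1}^{j+1}$, which by design cancels $E_{M+1}^{j,1}$ modulo lower $l$-order, i.e. modulo the next $E_{M+1}^{j+1,1}$ and the $E^{\cdot,2},E^{\cdot,3}$ remainder. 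Here again I invoke Theorem~\ref{thm-Antoniano} for the composition orders and Property~\ref{prop-ipl-residual} to convert symbol-vanishing into a drop in order. Letting $N\to\infty$ and using the asymptotic-summation of $I^{p,l}$ classes (Borel-type argument; the $l$-orders $-\mu-j+\frac12\to-\infty$) yields $B_{M+1} \sim \sum_j B_{M+1}^j$ with $(A_1^2+A_1^1)\circ(\sum_{i\le M}B_i + B_{M+1}) - Id$ having only the $E^{\cdot,2},E^{\cdot,3}$-type remainder of lower $p$-order, which is exactly the $E_{M+1}^{N,2}+E_{M+1}^{N,3}$ term in the limit.

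For the outer recursion I repeat the same move one order down in $p$: the surviving error $E_{M+1}^{\infty,2}+E_{M+1}^{\infty,3} \in I^{2\mu-M+\frac52,-2\mu-\frac32}$ (or the appropriate limiting class) has a nonzero $\Delta$-principal symbol which I invert to define $B_{M+2}$, restarting the inner iteration. Summing $B\sim\sum_i B_i$ asymptotically (now the $p$-orders $\mu-i-\frac12\to-\infty$ provide convergence) and invoking Property~\ref{prop-ipl-caps}(i) — $\bigcap_l I^{p,l}(\Delta,C_\Sigma) = I^p(C_\Sigma)$ applied to the doubly-limiting error, together with the fact that $I^{2\mu+\frac32}(C_\Sigma)$ with $\mu<-1$ has order $2\mu+\frac32 < -\frac12$ and maps between Sobolev spaces with a fixed positive gain on the whole $C_\Sigma$ — gives the final conclusion $(A_1^2+A_1^1)\circ B - Id \in I^{2\mu+\frac32}(C_\Sigma)$ modulo genuinely smoothing operators. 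The main obstacle I anticipate is the symbol calculus on $C_\Sigma$ near its intersection $\Sigma$ with $\Delta$: inverting the leading $C_\Sigma$-symbol requires controlling the order-$r$ blowup along $\Sigma$ (the $R^r(\Lambda_1,\Sigma)$ spaces) and checking that division by the elliptic factor preserves membership in the right class, and simultaneously verifying that the two error families $E^{\cdot,1}$ and $E^{\cdot,2},E^{\cdot,3}$ stay in the stated bidegrees under every composition — this exponent bookkeeping, propagated through both iterations with the $A_1^1$ correction, is where the real work lies; by contrast the ellipticity and $0$-section-avoidance on $\mathrm{supp}(\chi_1)$ are already handed to us by the remarks preceding the theorem.
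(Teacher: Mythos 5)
Your toolkit is the right one --- invert $\sigma(A_1^2)$ on $\mathrm{supp}(\chi_1)$ to get $B_1^1\in I^{\mu-\frac32,-\mu-\frac12}(\Delta,C_\Sigma)$, compose via Theorem \ref{thm-Antoniano}, use Property \ref{prop-ipl-residual} to split the error after symbol cancellation, sum asymptotically in two indices, and finish with Property \ref{prop-ipl-caps} --- but you have the two iterations wired up backwards, and this is a structural gap, not a relabeling. In the paper the inner ($j$) loop cancels the pair $E_i^{j,2}+E_i^{j,3}$ (the terms of lower $l$-order): one sets $\sigma(A_1^2\circ B_i^{j+1})=-\sigma(E_i^{j,2}+E_i^{j,3})$, which by Theorem \ref{thm-Antoniano} forces $B_i^{j+1}\in I^{\mu-i-\frac12,\,-\mu-j-\frac12}$ --- same $p$, lower $l$, exactly the class the theorem asserts. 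The terms $E_i^{j,1}$ (lower $p$-order) are \emph{not} cancelled in the inner loop; they accumulate, which is why the displayed identity contains $\sum_{j=1}^N E_{M+1}^{j,1}$, and their asymptotic sum $E_i\in I^{2\mu-i+\frac32,-2\mu-\frac32}$ is what seeds the next outer stage via $\sigma(A_1^2\circ B_{i+1}^1)=-\sigma(E_i)$. You propose the opposite: the inner loop cancels $E^{j,1}$ and defers $E^{j,2},E^{j,3}$. That cannot reproduce the statement: cancelling $E_{M+1}^{j,1}\in I^{2\mu-(M+1)+\frac32,\,-2\mu-j-\frac12}$ requires a corrector with $p$-order one lower and the \emph{same} $l$-order, i.e.\ an object of type $B_{M+2}^{j}$, not $B_{M+1}^{j+1}$, and the $E^{j,1}$ would then not appear summed in the error identity. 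Nor are the two limits interchangeable: the inner limit uses $\bigcap_l I^{p,l}=I^p(C_\Sigma)$ and leaves residuals $M_i\in I^{2\mu+\frac32}(C_\Sigma)$ microsupported on $C_\Sigma$, which have no $\Delta$-principal symbol to invert --- so your outer step, which acts on the limiting $E^{\infty,2}+E^{\infty,3}$, targets precisely the piece that must simply be accepted as the residual, while the piece that actually needs further correction, $\sum_j E^{j,1}$, is left untreated.

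Two further points. First, your order bookkeeping is off where you do attempt it: $A_1^2\circ B_1^1\in I^{2\mu+\frac32,\,-2\mu-\frac32}$ (not $-2\mu-\frac12$), and $A_1^2\circ B_i^j\in I^{2\mu-i+\frac52,\,-2\mu-j-\frac12}$ (your $I^{2\mu-i+3,-i-j}$ is wrong in both slots; the $l$-composition is $(-\mu-\frac12)+(-\mu-j+\frac12)-\frac12$ and contains no $i$). One application of Property \ref{prop-ipl-residual} then yields exactly the stated $E_i^{j,1}$ and $E_i^{j,2}$ classes, and $A_1^1\circ B_i^j$ lands in the $E_i^{j,3}$ class directly, with no symbol cancellation. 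Second, the obstacle you flag as the main difficulty --- inverting a leading $C_\Sigma$-symbol and controlling the $R^r(\Lambda_1,\Sigma)$ blow-up --- does not arise in the paper's construction: every corrector is obtained by dividing the relevant error symbol by $\sigma(A_1^2)=\gamma\abs{\sigma}^2$ (expanded via the geometric series in $\gamma_1^{\pm}h_{\pm}^{-\mu-1}/\gamma_0$), which is elliptic on $\mathrm{supp}(\chi_1)$ precisely because $\abs{\sigma}$ dominates there and the zero section is avoided; one never divides by a symbol restricted to $C_\Sigma$.
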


 Using the composition calculus for flowouts, and their estimates, covered by the clean intersection calculus of Duistermaat-Guillemin \cite{Duistermaat75} and Weinstein \cite{Weinstein74}, we know that $I^{2\mu+\frac{3}{2}}(C_{\Sigma}):H^s \rightarrow H^{s-(2\mu+\frac{3}{2})-\frac{1}{2}}$, or $H^s \rightarrow H^{s-(2\mu+2)}$, which is a smoothing operator as long as $2\mu+2<0$, or $\mu<-1$, which has been assumed all along. 

Before we turn to the proof of Theorem \ref{thm-chi1inversion}, detailed in Section \ref{sec-Inversion}, we would like to understand the compositions $A_2 \circ B$ and $A_3^{\mu}\circ B$, $A_3^{\mu+1}\circ B$. It turns out that these compositions lie in the same residual class, as was obtained from the theorem, namely $I^{2\mu+\frac{3}{2}}(C_{\Sigma})$. Thus, our constructed $B \in I^{\mu-\frac{3}{2}, -\mu-\frac{1}{2}}$ with symbol $b_{-2,\mu}$, such that $(A_1^2+A_1^1)\circ B=Id \quad \text{mod }I^{2\mu+\frac{3}{2}}(C_{\Sigma})$ can be viewed as the Green's function to the whole operator $L_{\gamma}=A_1^2+A_1^1+A_2+A_3^{\mu}+A_3^{\mu+1}$.

The compositions $A_2 \circ B$ and $A_3^{\mu}\circ B$, $A_3^{\mu+1}\circ B$ are captured by the following theorem: 
\begin{theorem}\label{thm-composition}
Let $A \in I^{p,l}(C_0, C_{\Sigma})$ or $A \in I^p(C_{\Sigma})$, and let $B \in I^{p',l'}(\Delta, C_{\Sigma})$ with both operators having restricted support. In particular, for constants
$c_1<c_2<\frac{1}{2}<2<c_3<c_4<\infty$, assume the kernel of $A$ has the representation:
\begin{equation*}
\begin{split}
K_A(x, z)=\int e^{i[(x-z)\cdot \sigma + h(z) \tau]} a_{M,M'}(z; \tau, \sigma) \, d\sigma \, d\tau, \\
\text{ where } a_{M,M'}\in S^{M,M'}(\mathbb{R}_z^n; \mathbb{R}_{\tau}^1\setminus 0, \mathbb{R}_{\sigma}^n) \text{ and }  c_3\abs{\sigma}<\abs{\tau},
\end{split}
\end{equation*}
or
\begin{equation*}
\begin{split}
K_A(x, z)=\int e^{i[(x-z)\cdot \sigma + h(z) \tau]} a_{M}(z; \tau, \sigma) \, d\sigma \, d\tau, \\
\text{ where } a_{M}\in S^{M}(\mathbb{R}_z^n; \mathbb{R}_{\tau, \sigma}^{n+1}\setminus 0) \text{ and } c_1\abs{\sigma}<\abs{\tau}<c_4\abs{\sigma},
\end{split}
\end{equation*}
and the kernel of $B$ has the representation:
\begin{equation*}
\begin{split}
K_B(z, y)=\int e^{i[(z-y)\cdot \tilde\sigma - h(z) \tilde\tau]} b_{\tilde M,\tilde M'}(z; \tilde\sigma, \tilde\tau) \, d\tilde\sigma \, d\tilde\tau, \\
\text{ where } b_{\tilde M,\tilde M'}\in S^{\tilde M,\tilde M'}(\mathbb{R}_z^n; \mathbb{R}_{\tilde\sigma}^n \setminus 0, \mathbb{R}_{\tilde\tau}^1) \text{ and }  \abs{\tilde\tau}<c_2 \abs{\tilde\sigma}.
\end{split}
\end{equation*}
Then the composition
\begin{equation*}
 A \circ B \in I^{p+p'+\frac{1}{2}}(C_{\Sigma}).
\end{equation*}
In the above notation, the canonical relations $\Delta, C_{\Sigma}$ and $C_0$ are defined, respectively, as:
\begin{equation*}
 \Delta=\{(x, \xi, y, \eta) \vert x=y, \xi=\eta\in \mathbb{R}^n \setminus 0\} 
\end{equation*}
\begin{equation*}
C_{\Sigma}=\{(x, \xi, y, \eta) \vert x=y, \eta=\xi-\nabla h(x) t, x\in S, t\in \mathbb{R}^1, (\xi, t)\in \mathbb{R}^{n+1} \setminus 0\} 
\end{equation*}
\begin{equation*}
C_0=\{(x, 0, y, \nabla h(x) t) \vert x\in \mathbb{R}^n, y\in S, t\in \mathbb{R}^1\setminus 0\}. 
\end{equation*}
\end{theorem}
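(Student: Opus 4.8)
\emph{Proof plan.} The plan is to realize $A\circ B$ as an iterated oscillatory integral, collapse the two ``middle'' groups of variables by one nondegenerate stationary phase, remove the single remaining redundant frequency variable by an amplitude integration whose domain is pinned down by the support hypotheses, and finally read off the order from the $I^{p,l}$ symbol dictionary of Section~\ref{ipl}. This is the mechanism used for $K_{L_\gamma}$ in \eqref{eq-klgammalong}--\eqref{eq-klgamma}, applied one level up. First I would write
\begin{equation*}
K_{A\circ B}(x,y)=\int K_A(x,z)\,K_B(z,y)\,dz=\int e^{i\Psi}\,a(z;\tau,\sigma)\,b(z;\tilde\sigma,\tilde\tau)\,dz\,d\sigma\,d\tau\,d\tilde\sigma\,d\tilde\tau,
\end{equation*}
with $\Psi=(x-z)\cdot\sigma+(z-y)\cdot\tilde\sigma+h(z)(\tau-\tilde\tau)$. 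Since $\partial_{\tilde\sigma}\Psi=z-y$ and $\partial_z\Psi=-\sigma+\tilde\sigma+\nabla h(z)(\tau-\tilde\tau)$, the phase has the unique critical point $z=y$, $\tilde\sigma=\sigma-\nabla h(y)(\tau-\tilde\tau)$ in $(z,\tilde\sigma)$, and its Hessian there is the invertible block matrix with blocks $(\tau-\tilde\tau)H(y),\,I_n,\,I_n,\,0$, of determinant $\pm1$, just as in the reduction following \eqref{eq-phi1}. Stationary phase in $(z,\tilde\sigma)$ therefore introduces no power of the frequencies and leaves an integral in the $n+2$ variables $(\sigma,\tau,\tilde\tau)$ with phase $(x-y)\cdot\sigma+h(y)(\tau-\tilde\tau)$; the asymptotic expansion is justified by bounding $L$-derivatives of $a(z;\tau,\sigma)b(z;\tilde\sigma,\tilde\tau)$ at the critical point, $L=\nabla_{(z,\tilde\sigma)}\cdot\mathcal{H}^{-1}\nabla_{(z,\tilde\sigma)}$, exactly as in \eqref{eq-lop}--\eqref{eq-a-deriv-crit}, the support cones guaranteeing that the pinned $\tilde\sigma$ is comparable to $\langle\sigma,\tau-\tilde\tau\rangle$.

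Next I substitute $s=\tau-\tilde\tau$, so the phase becomes $(x-y)\cdot\sigma+h(y)s$, a nondegenerate phase in the $n+1$ variables $(\sigma,s)$ parametrizing $C_\Sigma$ ($d_x\phi=\sigma$, $-d_y\phi=\sigma-\nabla h(y)s$, whence the associated canonical relation is $\{x=y\in S,\ \eta=\xi-\nabla h(y)s\}=C_\Sigma$). Now $\tilde\tau$ is absent from the phase, so it is removed by integrating the amplitude: the leading amplitude is $a(y;s+\tilde\tau,\sigma)\,b(y;\sigma-\nabla h(y)s,\tilde\tau)$, integrated in $\tilde\tau$ over the region cut out by $|\tilde\tau|<c_2|\sigma-\nabla h(y)s|$ (from $\operatorname{supp} b$) intersected with the cone condition on $(\sigma,s+\tilde\tau)$ inherited from $\operatorname{supp} a$. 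Taking $|\nabla h|\equiv1$ on $S$, this region has length $\lesssim\langle\sigma,s\rangle$, and on it $\langle\tau,\sigma\rangle\sim\langle s+\tilde\tau\rangle$ and $\langle\tilde\sigma,\tilde\tau\rangle\sim\langle\sigma-\nabla h(y)s\rangle\lesssim\langle\sigma,s\rangle$. Inserting the $S^{M,M'}$-estimates for $a$ and $b$ and adding exponents, one uses that the symbol orders involved sum to $p+p'-1$ (both when $A\in I^{p,l}(C_0,C_\Sigma)$ and when $A\in I^p(C_\Sigma)$, where $|\sigma|\sim|\tau|$ replaces the cone), so that the single power gained from integrating over a $\langle\sigma,s\rangle$-interval puts the composite symbol $c(y;\sigma,s)$ in $S^{p+p'}$; since $I^m(C_\Sigma)$ corresponds to $S^{m-\frac12}$ in $n+1$ phase variables, this is $A\circ B\in I^{p+p'+\frac12}(C_\Sigma)$. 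One then checks that every lower-order term of the stationary-phase expansion, and the non-leading part of $c$, obey the same bound, and sums the expansion asymptotically.

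It then remains to see that nothing besides $C_\Sigma$ survives. The $\Delta$-end and the $C_\Sigma$-end of $B$ both compose with either end of $A$ to give only $C_\Sigma$ --- because $C_\Sigma\circ C_\Sigma=C_\Sigma$, and, since $A$ already forces the middle base point onto $S$, also $C_\Sigma\circ\Delta=C_\Sigma$ --- whereas the ``$C_0$-end'' of $A$ (the subcone $|\sigma|/|\tau|\to0$, present when $A\in I^{p,l}(C_0,C_\Sigma)$) composes with $B$ to a kernel whose wavefront set has vanishing covector in $x$, hence a smoothing operator, which is absorbed into $I^{p+p'+\frac12}(C_\Sigma)$. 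For the zero section: $\operatorname{supp} b$ forces the middle covector to satisfy $|\tilde\sigma-\nabla h(z)\tilde\tau|\ge(1-c_2)|\tilde\sigma|>0$, so only the portion of $A$ away from its own right-zero-section contributes, and the cones give $\sigma$ a positive lower bound on the resulting support; thus the $I^{p+p'+\frac12}(C_\Sigma)$ obtained is supported off both zero sections of $C_\Sigma$, and the ordinary clean-composition calculus is all that is needed.

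The main obstacle I expect is the step just described --- showing that the incompatibility encoded in $c_1<c_2<\tfrac12<2<c_3<c_4$ genuinely prevents a second Lagrangian from appearing, i.e., that the $C_0$-end of $A$ contributes only a smoothing remainder rather than a true two-Lagrangian $I^{p,l}$ component; this is where the precise numerology of the cover \eqref{eq-opencover-u} is forced into play. The remaining work --- carrying the $S^{M,M'}$ exponent bookkeeping through the change of variables and the $\tilde\tau$-integration uniformly in the region, and the standard verification that the stationary-phase remainder is of lower order despite the non-uniformly-sized Hessian --- is routine but must be done with care.
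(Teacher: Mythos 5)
Your route is genuinely different from the paper's. You collapse $(z,\tilde\sigma)$ by stationary phase, integrate out $\tilde\tau$, and read the order off the resulting symbol; the paper never performs a stationary phase here. It instead introduces a second partition of unity in $\abs{\tau}/\abs{\tilde\sigma}$, kills the two non-comparable regions by nonstationary phase in $z$ alone (this is where the numerology $\epsilon_2,\epsilon_3$ built from $c_2,c_3$ enters, producing $\abs{\nabla_z\Phi}\geq\frac12\abs{\tilde\sigma}$ resp.\ $\frac12\abs{\tau}$), and on the comparable region invokes Melrose's iterated-regularity characterization (Theorem \ref{thm-itreg-2}) with explicit generators of the ideal of $C_\Sigma'$, obtaining membership in $I^{p''}(C_\Sigma)$ for \emph{some} $p''$; the value $p''=p+p'+\frac12$ is then extracted softly from $I^{p,l}(C,C_\Sigma)\hookrightarrow I^p(C_\Sigma\setminus C)$ and the clean composition formula $I^p(C_\Sigma)\circ I^{p'}(C_\Sigma)\subset I^{p+p'+\frac12}(C_\Sigma)$, rather than from a symbol computation.

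There is, however, a genuine gap in your argument in the case $A\in I^p(C_\Sigma)$ (the $\chi_2$ piece, $c_1\abs{\sigma}<\abs{\tau}<c_4\abs{\sigma}$). The validity of your stationary phase in $(z,\tilde\sigma)$ requires the critical value $\tilde\sigma_{\mathrm{crit}}=\sigma-\nabla h(y)(\tau-\tilde\tau)$ to be comparable to the full frequency $\langle\sigma,\tau,\tilde\tau\rangle$; otherwise the $\tilde\rho\,\partial^2_{\tilde\sigma}$ term coming from the Hessian block produces no gain and the expansion is not asymptotic. When $c_3\abs{\sigma}<\abs{\tau}$ this comparability does follow from the supports (as you assert), but when $\abs{\sigma}\sim\abs{\tau}$ the direction $\sigma$ parallel to $\nabla h(y)$ with $\abs{\sigma}\approx\abs{\tau}$ lies inside the support of $a_M$, and there $\tilde\sigma_{\mathrm{crit}}\approx\nabla h(y)\tilde\tau$ can satisfy $1\ll\abs{\tilde\sigma_{\mathrm{crit}}}\ll\abs{\tau}$ while remaining compatible with $\abs{\tilde\tau}<c_2\abs{\tilde\sigma_{\mathrm{crit}}}$; the support of $b$ does not excise this transition region. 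This is exactly the degeneration that forces the $\Omega_{bad}$ analysis and the appearance of the extra Lagrangian $C_0^t$ in Theorem \ref{thm-FIO-0sec} -- indeed the degenerate set is where the right covector $\sigma-\nabla h(y)(\tau-\tilde\tau)$ of the composed kernel tends to zero. Your proposal identifies the $C_0$-end of $A$ as the main obstacle but not this one, and the concluding claim that ``the cones give $\sigma$ a positive lower bound'' controls the output covector, not the pinned middle covector $\tilde\sigma_{\mathrm{crit}}$. To close the gap you would need a separate argument on a conic neighborhood of $\{\sigma\parallel\nabla h,\ \abs{\sigma}\sim\abs{\tau}\}$ showing that the support restriction on $b$ reduces that contribution to an acceptable class -- which is essentially what the paper's iterated-regularity argument accomplishes without ever expanding the phase.
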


The proof of this theorem is given in Section \ref{sec-Composition}.

\subsection{Iterative construction of Green's function} \label{sec-Inversion}

This section gives the proof of Theorem {\ref{thm-chi1inversion}}.
First, we will start with the inversion of $A_1^2$ as defined in (\ref{eq-A12}).
The integral $A_1^2$ comes as a representation of $\gamma(x)\Laplace \in I^{\mu+\frac{5}{2}, -\mu-\frac{1}{2}}(\Delta, C_{\Sigma})$,
and the integral $A_1^1$ comes as a representation of $(\grad \gamma(x))\cdot\nabla \in I^{\mu+\frac{5}{2}, -\mu-\frac{3}{2}}(\Delta, C_{\Sigma})$ on $\chi_1$.
For the conductivity, $\gamma(x) \in I^{\mu}(S)$, we can write:
\begin{equation}\label{eq-gamma-g0-g1} 
 \gamma(x)=\gamma_0(x)+\gamma_1^{\pm}(x)h(x)_{\pm}^{-\mu-1}=\gamma_0(x)\cdot[1+\frac{\gamma_1^{\pm}(x)}{\gamma_0(x)}h(x)_{\pm}^{-\mu-1}], 
\end{equation}
where $\gamma_0$ is a smooth background, and $\gamma_1^+$ and $\gamma_1^-$ are smooth functions on either side of the hypersurface $S$. Since $\mu<-1$, it means $-\mu-1>0$, 
which means $\gamma(x)$ is continuous across the hypersurface $S$ with $\gamma=\gamma_0$ on $S$, but it can have a jump in derivative since $\gamma_1^+(x)$ may differ 
from $\gamma_1^-(x)$ as $x \rightarrow S$.
We look for $B_1^1$, such that its symbol is the reciprocal of the principal symbol of $A_1^2$, namely:
\begin{equation}\label{eq-B11-sigma}
\sigma(B_1^1)=\sigma(A_1^2)^{-1}=\abs{\sigma}^{-2}\frac{1}{\gamma_0(x)}\left[1-\frac{\gamma_1^{\pm}(x)}{\gamma_0(x)}h(x)_{\pm}^{-\mu-1} + \left(\frac{\gamma_1^{\pm}(x)}{\gamma_0(x)}h(x)_{\pm}^{-\mu-1}\right)^2-...\right].
 \end{equation}
In order to identify the $I^{p,l}$ class of $B_1^1$, we use the following identities:
\begin{equation*}
\begin{array}{l}
p+l=-2\\
l-\frac{1}{2}=-\mu-1.
\end{array} 
\end{equation*}

Solving this system for $p$ and $l$ lets us conclude that $B_1^1\in I^{\mu-\frac{3}{2}, -\mu-\frac{1}{2}}(\Delta, C_{\Sigma})$, as claimed in the theorem, when $i=1$ and $j=1$. 

From here on in this section, we will write for brevity simply $I^{p,l}$, when we mean the class $I^{p,l}(\Delta, C_{\Sigma})$. 

By Antoniano and Uhlmann's theorem, Theorem \ref{thm-Antoniano}, the composition:
\begin{equation} \label{eq-A12-B11}
 A_1^2 \circ B_1^1 \in I^{\mu+\frac{5}{2}, -\mu-\frac{1}{2}} \circ I^{\mu-\frac{3}{2}, -\mu-\frac{1}{2}}
 \subset I^{2\mu+\frac{3}{2}, -2\mu-\frac{3}{2}}.
 \end{equation}
Note that the composition is in a class that includes the identity operator Id, since the sum of the two superscripts is $0$, therefore including the pseudodifferential operators of order $0$. Moreover, its principal symbol is 1, by construction. Denoting the residual class terms as $E_1^{1,1}$ and $E_1^{1,2}$, respectively, we can write
\begin{equation}\label{eq-sum2errors}
 A_1^2\circ B_1^1-Id = E_1^{1,1}+E_1^{1,2}\in I^{2\mu+\frac{1}{2}, -2\mu-\frac{3}{2}}+I^{2\mu+\frac{3}{2}, -2\mu-\frac{5}{2}}
\end{equation}
Now, we denote by $E_1^{1,3}$ the composition $A_1^1 \circ B_1^1$, which we can once again understand by utilizing Theorem \ref{thm-Antoniano}:
\begin{equation} \label{eq-E113}
 E_1^{1,3}=A_1^1 \circ B_1^1\in I^{\mu+\frac{5}{2}, -\mu-\frac{3}{2}} \circ I^{\mu-\frac{3}{2}, -\mu-\frac{1}{2}} \subset I^{2\mu+\frac{3}{2}, -2\mu-\frac{5}{2}}.
\end{equation}
Notice that $E_1^{1,3}$ is in the same class as $E_1^{1,2}$ and 
\begin{equation} \label{eq-sum3errors}
 (A_1^2+A_1^1)\circ B_1^1 - Id = E_1^{1,1}+(E_1^{1,2}+E_1^{1,3}).
\end{equation}
We proceed with the construction of $B_1^2$, such that 
\begin{equation}
 \sigma(A_1^2 \circ B_1^2)=-\sigma(E_1^{1,2}+E_1^{1,3}) \in I^{2\mu+\frac{3}{2}, -2\mu-\frac{5}{2}}.
\end{equation}
By the requirements above and Theorem \ref{thm-Antoniano}, we can conclude that $B_1^2\in I^{\mu-\frac{3}{2}, -\mu-\frac{3}{2}}$ and
\begin{equation}
 A_1^2\circ B_1^2-(E_1^{1,2}+E_1^{1,3})=0 \text{ mod } I^{2\mu+\frac{1}{2}, -2\mu-\frac{5}{2}}+I^{2\mu+\frac{3}{2}, -2\mu-\frac{7}{2}}:=E_1^{2,1}+E_1^{2,2}.
\end{equation}
Now, the composition $E_1^{2,3}:=A_1^1\circ B_1^2$ matches $E_1^{2,2}$, just like in the previous stage, and we have:
\begin{equation}
 (A_1^2+A_1^1) \circ (B_1^1 + B_1^2) - Id = E_1^{1,1} + E_1^{2,1}+(E_1^{2,2}+E_1^{2,3}).
\end{equation}
We continue analogously with the construction of each consecutive $B_1^j$, $j \in N$. In particular, $B_1^{j+1}$ is constructed, so that
\begin{equation}
 \sigma(A_1^2 \circ B_1^{j+1})=-\sigma (E_1^{j,2}+E_1^{j,3})
\end{equation}
and as a consequence,
\begin{equation}
 (A_1^2+A_1^1) \circ (\sum_{j=1}^N B_1^j)-Id=\sum_{j=1}^N E_1^{j,1}+(E_1^{N,2}+E_1^{N,3}).
\end{equation}
At the end of stage $i=1$, we can express:
\begin{equation}
 B_1\sim \sum B_1^j \in I^{\mu-\frac{3}{2}, -\mu-\frac{1}{2}}
\end{equation}
and
\begin{equation}
 E_1\sim \sum E_1^{j,1} \in I^{2\mu+\frac{1}{2}, -2\mu-\frac{3}{2}}.
\end{equation}
If we now analyze the expression $(A_2+A_1)\circ B_1-Id-E_1$, we can conclude that
\begin{equation}
 (A_2+A_1)\circ B_1 - Id = E_1 \in I^{2\mu+\frac{1}{2}, -2\mu-\frac{3}{2}} \text{ mod } I^{2\mu+\frac{3}{2}}(C_{\Sigma}),
\end{equation}
by the following analysis. For brevity of notation, we omit the $\circ$ sign, but composition should be understood. For an arbitrarily large $N \in \mathbb{N}$, we can write:
\begin{equation*}
 \begin{split}
  &(A_1^2+A_1^1)B_1-Id-E_1\\
  &=(A_1^2+A_1^1)\left((B_1-\sum_{j=1}^N B_1^j)+\sum_{j=1}^N B_1^j\right)-Id-\left((E_1-\sum_{j=1}^N E_1^{j,1})+\sum_{j=1}^N E_1^{j,1}\right)\\
  &=(A_1^2+A_1^1)(B_1-\sum_{j=1}^N B_1^j)+\left((A_1^2+A_1^1)\sum_{j=1}^N B_1^j-Id-\sum_{j=1}^N E_1^{j,1}\right)-(E_1-\sum_{j=1}^N E_1^{j,1})\\
  &= (A_1^2+A_1^1)(B_1-\sum_{j=1}^N B_1^j)+(E_1^{N,2}+E_1^{N,3})-(E_1-\sum_{j=1}^N E_1^{j,1})\\
  & \in  I^{\mu+\frac{5}{2},-\mu-\frac{1}{2}} \circ I^{\mu-\frac{3}{2},-\mu-N-\frac{1}{2}} +
     I^{2\mu+\frac{3}{2},-2\mu-\frac{3}{2}-N}
    -  I^{2\mu+\frac{1}{2},-2\mu-\frac{3}{2}-N}\\
  & \subset I^{2\mu+\frac{3}{2},-2\mu-\frac{3}{2}-N}+
     I^{2\mu+\frac{3}{2},-2\mu-\frac{3}{2}-N}
    -  I^{2\mu+\frac{1}{2},-2\mu-\frac{3}{2}-N}\\
  & \subset  I^{2\mu+\frac{3}{2},-2\mu-\frac{3}{2}-N}
 \end{split}
\end{equation*}
The $I^{p,l}$ classes are nested, i.e., for $p'<p\in \mathbb{R}$ or $l'<l \in \mathbb{R}$, $I^{p',l}\subset I^{p,l}$ and $I^{p,l'}\subset I^{p,l}$. To obtain the last inclusion, we have thus used $I^{2\mu+\frac{1}{2},-2\mu-\frac{3}{2}-N} \subset I^{2\mu+\frac{3}{2},-2\mu-\frac{3}{2}-N}$.
This is valid for every $N$, so we can take the limit as $N \rightarrow \infty$.  Using the fact stated in \ref{prop-ipl-caps}, that 
$$\bigcap_{l \in \mathbb{R}} I^{p,l}(\Delta, C_{\Sigma})=I^p(C_{\Sigma}),$$
we can write
$$(A_1^2+A_1^1)B_1-Id = E_1 \in I^{2\mu+\frac{1}{2},-2\mu-\frac{3}{2}} (\Delta, C_{\Sigma}) \text{ mod } I^{2\mu+\frac{3}{2}}(C_{\Sigma}).$$
For later derivations, we label the residual above as $M_1\in I^{2\mu+\frac{3}{2}}(C_{\Sigma})$. 
Using the mapping properties of FIO's, we know that $I^{2\mu+\frac{3}{2}}(C_{\Sigma}):H^s \rightarrow H^{s-(2\mu+\frac{3}{2})-\frac{1}{2}}$, or $H^s \rightarrow H^{s-(2\mu+2)}$, which is a smoothing operator as long as $2\mu+2<0$, or $\mu<-1$, which has been assumed all along.

Now, we begin the second stage of the iteration process and we indicate that by using the subscript $i=2$ for all operators in this stage. We construct $B_2^1$, such that
$\sigma(A_1^2\circ B_2^1)=-\sigma(E_1)$. Since $A_1^2 \in I^{\mu+\frac{5}{2}, -\mu-\frac{1}{2}}$ and $E_1 \in I^{2\mu+\frac{1}{2},-2\mu-\frac{3}{2}}$, by Theorem \ref{thm-Antoniano}, we can deduce that
\begin{equation*}
B_2^1 \in I^{\mu-\frac{5}{2}, -\mu-\frac{1}{2}}.
\end{equation*}
Consequently, 
\begin{equation*}
(A_1^2+A_1^1)\circ (B_1+B_2^1)-Id \in I^{2\mu-\frac{1}{2},-2\mu-\frac{3}{2}}+I^{2\mu+\frac{1}{2},-2\mu-\frac{5}{2}}:=E_2^{1,1}+E_2^{1,2} \text{ mod } I^{2\mu+\frac{3}{2}}(C_{\Sigma}).
\end{equation*}
Then 
$$A_1^1\circ B_2^1 \in I^{\mu+\frac{5}{2}, -\mu-\frac{3}{2}} \circ I^{\mu-\frac{5}{2}, -\mu-\frac{1}{2}}:=E_2^{1,3}\in I^{2\mu+\frac{1}{2}, -2\mu-\frac{5}{2}}.$$
Just as in the previous stage, we have that $E_2^{1,2}$ and $E_2^{1,3}$ belong to the same class, and the principal symbol of their sum guides us in the construction of $B_2^2$ such that:
\begin{equation*}
 \sigma(A_1^2 \circ B_2^2)=-\sigma(E_2^{1,2}+E_2^{1,3}).
\end{equation*}
Eventually, for stage 2, we have:
\begin{equation*}
 (A_1^2+A_1^1)\circ(B_1+\sum_{j=1}^N B_2^j)-Id=\sum_{j=1}^N E_2^{j,1}+(E_2^{N,2}+E_2^{N,3})+M_1,\quad M_1\in I^{2\mu+\frac{3}{2}}(C_{\Sigma}). 
\end{equation*}
At the end of stage $i=2$, we can express
\begin{equation*}
 B_2\sim \sum_{j=1}^{\infty} B_2^j \in I^{\mu-\frac{5}{2},-\mu-\frac{1}{2}}
\end{equation*}
and
\begin{equation*}
 E_2\sim \sum_{j=1}^{\infty} E_2^{j,1} \in I^{2\mu-\frac{1}{2},-2\mu-\frac{3}{2}}.
\end{equation*}

If we now analyze the expression $(A_2+A_1)\circ (B_1+B_2)-Id-E_2$, we can conclude that
\begin{equation}
 (A_2+A_1)\circ (B_1+B_2) - Id = E_2 \in I^{2\mu-\frac{1}{2}, -2\mu-\frac{3}{2}} \text{ mod } I^{2\mu+\frac{3}{2}}(C_{\Sigma}),
\end{equation}
using the following analysis. For brevity of notation, we omit the $\circ$ sign, but compositions should be understood. For an arbitrarily large $N \in \mathbb{N}$, we can write:
\begin{equation*}
 \begin{split}
  &(A_1^2+A_1^1)(B_1+B_2)-Id-E_2\\
  =&(A_1^2+A_1^1)\left((B_1+\sum_{j=1}^N B_2^j)+(B_2-\sum_{j=1}^N B_2^j)\right)-Id-\left((E_2-\sum_{j=1}^N E_2^{j,1})+\sum_{j=1}^N E_2^{j,1}\right)\\
  =&\left((A_1^2+A_1^1)(B_1+\sum_{j=1}^N B_2^j)-Id-\sum_{j=1}^N E_2^{j,1}\right)
  +(A_1^2+A_1^1)(B_2-\sum_{j=1}^N B_2^j)-(E_2-\sum_{j=1}^N E_2^{j,1})\\
  =&E_2^{N,2}+E_2^{N,3}+M_1+(A_1^2+A_1^1)(B_2-\sum_{j=1}^N B_2^j)-(E_2-\sum_{j=1}^N E_2^{j,1})\\
  \in &  I^{2\mu+\frac{1}{2},-2\mu-\frac{3}{2}-N} + I^{2\mu+\frac{3}{2}}(C_{\Sigma})+
  I^{\mu+\frac{5}{2},-\mu-\frac{1}{2}} \circ I^{\mu-\frac{5}{2},-\mu-N-\frac{1}{2}} 
     -  I^{2\mu-\frac{1}{2},-2\mu-\frac{3}{2}-N}\\
 \subset & I^{2\mu+\frac{1}{2},-2\mu-\frac{3}{2}-N}+I^{2\mu+\frac{3}{2}}(C_{\Sigma})+
     I^{2\mu+\frac{1}{2},-2\mu-\frac{3}{2}-N}
    -  I^{2\mu-\frac{1}{2},-2\mu-\frac{3}{2}-N}\\
\subset & I^{2\mu+\frac{1}{2},-2\mu-\frac{3}{2}-N}+I^{2\mu+\frac{3}{2}}(C_{\Sigma})
 \end{split}
\end{equation*}
This is valid for every $N$, so we can take the limit as $N \rightarrow \infty$. The first term, which we can label with $M_2$, approaches $I^{2\mu+\frac{1}{2}}(C_{\Sigma})\hookrightarrow I^{2\mu+\frac{3}{2}}(C_{\Sigma})$.
As a conclusion, we can write that after the second stage of the inversion:
$$(A_1^2+A_1^1)(B_1+B_2)-Id - E_2=M_1+M_2 \in I^{2\mu+\frac{3}{2}}(C_{\Sigma}) + I^{2\mu+\frac{1}{2}}(C_{\Sigma})\subset I^{2\mu+\frac{3}{2}}(C_{\Sigma}),$$ where the error $E_2 \in I^{2\mu-\frac{1}{2}, -2\mu-\frac{3}{2}}$ 
is the initial input for the construction of the first operator $B_3^1$ in stage 3, namely, $\sigma(A_1^2\circ B_3^1)=-\sigma(E_2)$.    

As the stages progress, we construct $B_i=\sum_{j=1}^{\infty} B_i^j \in I^{\mu-i-\frac{1}{2},-\mu-\frac{1}{2}}(\Delta, C_{\Sigma})$, and we can build the asymptotic sum 
$B \sim \sum_{i=1}^{\infty} B_i \in I^{\mu-\frac{3}{2},-\mu-\frac{1}{2}}(\Delta, C_{\Sigma})$. $B$ is the desired Green's function. We claim that it serves as a right inverse to $A_1^2+A_1^1$ up to an operator in $I^{2\mu+\frac{3}{2}}(C_{\Sigma})$. This can be seen as follows:
\begin{equation*}
 \begin{split}
  &(A_1^2+A_1^1)B-Id\\
  =&(A_1^2+A_1^1)\left(\sum_{i=1}^N B_i+(B-\sum_{i=1}^N B_i)\right)-E_N+E_N-Id\\
  =&(A_1^2+A_1^1)\sum_{i=1}^N B_i-Id-E_N + (A_1^2+A_1^1)(B-\sum_{i=1}^N B_i) + E_N \\
   =&\sum_{i=1}^N M_i + (A_1^2+A_1^1)(B-\sum_{i=1}^N B_i) + E_N \\
 \in &  I^{2\mu+\frac{3}{2}}(C_{\Sigma})+
  I^{\mu+\frac{5}{2},-\mu-\frac{1}{2}} \circ I^{\mu-N-\frac{3}{2},-\mu-\frac{1}{2}} 
    +  I^{2\mu+\frac{3}{2}-N,-2\mu-\frac{3}{2}}\\
 \subset &  I^{2\mu+\frac{3}{2}}(C_{\Sigma})+  I^{2\mu+\frac{3}{2}-N,-2\mu-\frac{3}{2}}. 
 \end{split}
\end{equation*}
The above is valid for every $N$, and we can take the limit as $N\rightarrow \infty$. We use the fact that 
$$\cap_{p} I^{p,l}(\Delta, C_{\Sigma})=C^{\infty},$$ and the second summand disappears, leaving us with
$$(A_1^2+A_1^1)B-Id \in I^{2\mu+\frac{3}{2}}(C_{\Sigma}),$$ a smoothing operator, as already discussed.
This concludes the proof of the theorem. 

\subsection{Composition calculus: $I^{p,l}(C_0, C_{\Sigma})\circ I^{p',l'}(\Delta, C_{\Sigma})$} \label{sec-Composition}

The goal of this section is to understand the remaining compositions $A_2 \circ B$, $A_3^{\mu} \circ B$ and $A_3^{\mu+1} \circ B$. For this purpose, it is enough to prove Theorem 
\ref{thm-composition}. We will write in detail the case $A \in I^{p,l}(C_0, C_{\Sigma})$. It will be easy to see that all the techniques and conclusions then will hold for $A \in I^p(C_{\Sigma})$.

Suppose that $A$ and $B$ are as outlined in Theorem \ref{thm-composition}, namely, for constants
$c_1<c_2<\frac{1}{2}<2<c_3<c_4<\infty$, assume the kernel of $A$ has the representation:
\begin{equation}\label{eq-A}
\begin{split}
& K_A(x, z)=\int e^{i[(x-z)\cdot \sigma + h(z) \tau]} a_{M,M'}(z; \tau, \sigma) \, d\sigma \, d\tau, \\
& a_{M,M'}\in S^{M,M'}(\mathbb{R}_z^n; \mathbb{R}_{\tau}^1\setminus 0, \mathbb{R}_{\sigma}^n); c_3\abs{\sigma}<\abs{\tau};
\end{split}
\end{equation}
and the kernel of $B$ has the representation:
\begin{equation}\label{eq-B}
\begin{split}
&K_B(z, y)=\int e^{i[(z-y)\cdot \tilde\sigma - h(z) \tilde\tau]} b_{\tilde M,\tilde M'}(z; \tilde\sigma, \tilde\tau) \, d\tilde\sigma \, d\tilde\tau, \\
&b_{\tilde M,\tilde M'}\in S^{\tilde M,\tilde M'}(\mathbb{R}_z^n; \mathbb{R}_{\tilde\sigma}^n \setminus 0, \mathbb{R}_{\tilde\tau}^1); \abs{\tilde\tau}<c_2 \abs{\tilde\sigma}.
\end{split}
\end{equation}
Then the kernel of the composition $A \circ B$ can be represented as the integral
\begin{equation}\label{eq-AB}
\begin{split}
 K_{A \circ B} &= \int_{\mathbb{R}^{n}} A(x,z)B(z,y) \,dz\\
 &=\int_{\mathbb{R}^{3n+2}} e^{i[(x-z)\cdot \sigma + (z-y)\cdot \tilde\sigma + h(z)(\tau-\tilde\tau)]} a_{M,M'}(z; \tau, \sigma)b_{\tilde M,\tilde M'}(z; \tilde\sigma, \tilde\tau) \,dz \, d\sigma \, d\tau d\tilde\sigma \, d\tilde\tau
 \end{split}
\end{equation}
First, we note that away from the region, where $\abs{\tilde\sigma}\sim\abs{\tau}$, the integral is a $C^{\infty}$ function, which can be proved via integration by parts. We will
introduce a partition of unity $\psi_1+\psi_2+\psi_3\equiv 1$, that will allow us to focus on the regions, where $\abs{\tau}\lesssim \abs{\tilde \sigma}$, where $\abs{\tau}\sim \abs{\tilde \sigma}$, and where $\abs{\tilde \sigma}\lesssim \abs{\tau}$. Let:
\begin{equation} \label{eq-eps2}
 \epsilon_2=\frac{(1-c_2)-\frac{1}{2}}{1+\frac{1}{c_3}}
\end{equation}
and 
\begin{equation} \label{eq-eps3}
 \epsilon_3=\frac{(1+c_2)}{(1-\frac{1}{c_3})-\frac{1}{2}}.
\end{equation}
With these choices, recalling that $0<c_1<c_2<\frac{1}{2}<2<c_3<c_4<\infty$, it is clear that $0<\epsilon_2<1$ and $1<\epsilon_3<\infty$. Now make choices $\epsilon_1$ and $\epsilon_4$, such that:
\begin{equation}\label{eq-epsilons}
 0<\epsilon_1<\epsilon_2<1<\epsilon_3<\epsilon_4<\infty.
\end{equation}
Let's introduce the open cover $V$ on $S^1$, with:
\begin{equation*}
 V=\Big\{\{\frac{\abs{\tau}}{\abs{\tilde \sigma}}<\epsilon_2\}, \{\epsilon_1<\frac{\abs{\tau}}{\abs{\tilde \sigma}}<\epsilon_4\}, \{\epsilon_3<\frac{\abs{\tau}}{\abs{\tilde \sigma}}\}\Big\}.
\end{equation*}
Let  $\psi_1+\psi_2+\psi_3\equiv 1$ be a partition of unity on $S^1$, subordinate to this open cover. Extend them as homogeneous functions of degree 0 to all of $\mathbb{R}^2\setminus 0$, and then to $\mathbb{R}^{n+1}\setminus 0$, with $\tilde \psi_i(\tau, \tilde\sigma)=\psi_i(\frac{\abs{\tau}}{\abs{\tilde \sigma}})$.
In order to do the integration by parts we consider: 
\begin{equation}
 \abs{\Phi_z}=\abs{-\sigma + \tilde\sigma +\nabla h(z)(\tau - \tilde \tau)}=\abs{-\sigma + \nabla h(z) \tau + \tilde \sigma - \nabla h(z) \tilde \tau}, 
\end{equation}
where
\begin{equation}
 \Phi=(x-z)\cdot \sigma + (z-y)\cdot \tilde\sigma + h(z)(\tau-\tilde\tau)
\end{equation}
is the phase function of the integral. 

Note that on the support of $\psi_1$, $\abs{\tau} < \epsilon_2 \abs{\tilde \sigma}$, and we have:
\begin{equation}
 \begin{split}
  \abs{\Phi_z}&=\abs{-\sigma + \nabla h(z) \tau + \tilde \sigma - \nabla h(z) \tilde \tau}\\
  &\geq \abs{\tilde \sigma - \nabla h(z) \tilde \tau}-\abs{-\sigma + \nabla h(z) \tau}\\
  &\geq \abs{\tilde \sigma}-\abs{\tilde \tau} - (\abs{\sigma}+ \abs{\tau})\\
  &>(1-c_2)\abs{\tilde\sigma}-(1+\frac{1}{c_3})\abs{\tau}\\
  &>[(1-c_2) - (1+\frac{1}{c_3})\epsilon_2] \abs{\tilde\sigma}\\
  &=\frac{1}{2}\abs{\tilde \sigma}
  \end{split}
\end{equation}
Since on $\psi_1$, $\abs{\nabla_z \Phi}>\frac{1}{2}\abs{\tilde \sigma}$, we can conclude there exists a first order $\Psi DO$, $L_{\tilde \sigma}=O(\frac{1}{\tilde \sigma})\nabla_z$, such that $L_{\tilde \sigma}^N(e^{i\Phi})=e^{i\Phi}$, for all $N\in \mathbb{N}$. Then on $\psi_1$, integrating by parts gives:
\begin{equation}
\begin{split}
 K_{A \circ B} (x,y) = \pm & \int_{\mathbb{R}^n} \int_{\abs{\tilde \tau}<c_2\abs{\tilde \sigma}} \int_{\abs{\tau}<\epsilon_2 \abs{\tilde\sigma}} \int_{\abs{\sigma}<\frac{1}{c_3}\abs{\tau}}\int_{z \in C}\\
      &e^{i\Phi}(L_{\tilde \sigma}^t)^N (a_{M,M'}(z; \tau, \sigma)b_{\tilde M,\tilde M'}(z; \tilde\sigma, \tilde\tau)) \,dz \, d\sigma \, d\tau \, d\tilde\tau \, d\tilde\sigma.
\end{split}
\end{equation}
In the above integral notation, $C$ stands for a compact set. Provided $N$ is large enough, the order of $\tilde \sigma$ can be made arbitrarily negative, which will eventually ensure the absolute convergence of the integral. Derivatives in $x$ and $y$ just increase the effective orders of $\sigma$ and $\tilde \sigma$ in the amplitude, but that can be offset by increasing $N$. Thus, $K_{A \circ B}$ is $C^{\infty}$ on $\psi_1$.

In a very similar fashion, we prove that on the support of $\psi_3$, where $\abs{\tilde \sigma}<\frac{1}{\epsilon_3 }\abs{\tau}$, $K_{A \circ B}$ is $C^{\infty}$. 
Note that on $\psi_3$,
\begin{equation}
 \begin{split}
  \abs{\Phi_z}&\geq\abs{-\sigma + \nabla h(z) \tau} -\abs{\tilde \sigma - \nabla h(z) \tilde \tau}\\
   &\geq \abs{\tau} -\abs{\sigma} - (\abs{\tilde \sigma}+\abs{\tilde \tau})\\
  &>(1-\frac{1}{c_3})\abs{\tau}-(1+c_2)\abs{\tilde\sigma}\\
  &>[(1-\frac{1}{c_3})-(1+c_2)\frac{1}{\epsilon_3}] \abs{\tau}\\
  &=\frac{1}{2}\abs{\tau}
  \end{split}
\end{equation}

Since on $\psi_3$, $\abs{\nabla_z \Phi}>\frac{1}{2}\abs{\tau}$, we can conclude there exists a first order $\Psi DO$, $L_{\tau}=O(\frac{1}{\tau})\nabla_z$, such that $L_{\tau}^N(e^{i\Phi})=e^{i\Phi}$, for all $N\in \mathbb{N}$. Then on the support of $\psi_3$, integrating by parts gives:
\begin{equation}
\begin{split}
 K_{A \circ B (x,y)} = \pm & \int_{\mathbb{R}^n} \int_{\abs{\sigma}<\frac{1}{c_3} \abs{\tau}} \int_{\abs{\tilde\sigma}<\frac{1}{\epsilon_3}\abs{\tau}} \int_{\abs{\tilde\tau}<c_2\abs{\tilde \sigma}} \int_{z \in C} \\
      & e^{i\Phi}(L_{\tau}^t)^N \left(a_{M,M'}(z; \tau, \sigma)b_{\tilde M,\tilde M'}(z; \tilde\sigma, \tilde\tau)\right) \,dz \, d\tilde\tau \, d\tilde\sigma \, d\sigma \, d\tau.
\end{split}
\end{equation}
This is absolutely integrable, provided $N$ is chosen to be sufficiently large. For derivatives in $x$ and $y$, we simply need to increase $N$ to ensure absolute integrability. 
Thus, $A \circ B$ is
$C^{\infty}$ on the support of $\psi_3$.

Notice that even if $A \in I^p(C_{\Sigma})$, similar integration by parts leads to the same conclusion, namely $A \circ B$ is $C^{\infty}(\mathbb{R}^{2n})$ on the supports of $\psi_1$ and $\psi_3$.

Now, we can use iterated regularity to prove that on $\psi_2$, where $\abs{\tilde \sigma}\sim \abs{\tau}$, the integral (\ref{eq-AB}) is in $I^{p''}(C_{\Sigma})$ for some $p''$.
First, we look at the generating functions for the canonical relation:
$$C_{\Sigma}=\{(x, \xi, y, \eta) \vert x=y, \eta=\xi-\nabla h(x) t, x\in S, t\in \mathbb{R}^1, (\xi, t)\in \mathbb{R}^{n+1} \setminus 0\}.$$

We have that the following set of functions vanish on $C_{\Sigma}$ and form a redundant set of defining functions:
\begin{equation*}
 \begin{split}
 & x_j-y_j, 1\leq j \leq n \\
 & h(x) \\
 & (\xi - \eta) \wedge \nabla h(x).
 \end{split}
\end{equation*}
The last equation means that any $2 \times 2$ determinants of the $2 \times n$ matrix must be singular, i.e., $\frac{\partial h_j}{\partial x_j}(\xi_i-\eta_i)-\frac{\partial h_i}{\partial x_i}(\xi_j-\eta_j))=0, i \neq j$. To get the ideal generating the Lagrangian:
$$\Lambda = C_{\Sigma}'=\{(x, \xi, y, -\eta) \vert x=y, \eta=\xi-\nabla h(x) t, x\in S, t\in \mathbb{R}^1, (\xi, t)\in \mathbb{R}^{n+1} \setminus 0\}, $$
simply replace $\eta$ with $-\eta$, i.e., the set of functions
\begin{equation}
 \begin{split}
 & h(x) \\
 & x_j-y_j, 1\leq j \leq n \\
 & \frac{\partial h_j}{\partial x_j}(\xi_i+\eta_i)-\frac{\partial h_i}{\partial x_i}(\xi_j+\eta_j), i\neq j, 1\leq i, j \leq n 
 \end{split}
\end{equation}
vanish on $\Lambda$.
The first order $\Psi DO$s vanishing on $\Lambda$, are all in the form:
\begin{equation}
 \alpha_0 h(x) \abs{\xi}+\sum_{j=1}^n \beta_0^j (x_j-y_j) \abs{\xi}+\sum_{i,j=1}^n \gamma_0^{i,j}\left(\frac{\partial h_j}{\partial x_j}(\xi_i+\eta_i)-\frac{\partial h_i}{\partial x_i}(\xi_j+\eta_j)\right)
\end{equation}
where $\alpha_0$, $\beta_0^j$, and $\gamma_0^{i,j}$ are smooth symbols of order 0. To execute the iterated regularity, we need to show that each of the ideal generating first order $\Psi DO$s with symbols:

\begin{equation}
 h(x) \abs{\xi},
\end{equation}
\begin{equation}
(x_j-y_j)\abs{\xi}, 1\leq j \leq n 
\end{equation}
and 
\begin{equation}
\frac{\partial h_j}{\partial x_j}(\xi_i+\eta_i)-\frac{\partial h_i}{\partial x_i}(\xi_j+\eta_j), i\neq j, 1\leq i, j \leq n,
\end{equation}
preserves the Sobolev space $H^s$, to which $A\circ B$ belongs.

Note that in general, see \cite{Duistermaat96}, if
\begin{equation*}
 u(x)=\int e^{i\phi(x, \theta)}a(x, \theta) d\theta,
\end{equation*}
then 
\begin{equation*}
 P(x, D) u(x)=\int e^{i\Phi(x, \theta)}\left(p(x, d_x \Phi(x, \theta))+\text{lower order terms}\right)a(x, \theta) d\theta.
\end{equation*}
Here
\begin{equation}
 \Phi = (x-z)\cdot \sigma + (z-y) \cdot \tilde\sigma + h(z) (\tau-\tilde\tau).
\end{equation}

For the generator $h(x)\abs{\xi}$ it is enough to check for symbols $p_1(x,y,\xi, \eta)=h(x)\xi_i$, since 
\begin{equation*}
 h(x)\abs{\xi}=h(x)\frac{\abs{\xi}^2}{\abs{\xi}}=h(x) \frac{\sum_{i=1}^n \xi_i^2}{\abs{\xi}}=h(x)\sum_{i=1}^n \frac{\xi_i}{\abs{\xi}} \xi_i,
\end{equation*}
and each $\frac{\xi_i}{\abs{\xi}}$ is a symbol of order $0$.
Note that $\frac{\partial}{\partial x_i} \Phi = \sigma_i$, so we need to check that 
\begin{equation} \label{eq-p1}
\begin{split}
 &P_1(x,y, D)K_{A\circ B}(x,y)\\
 &= \int e^{i\Phi} h(x) \sigma_i a_{M,M'}(\tau, \sigma) b_{\tilde M \tilde M'}(\tilde \sigma, \tilde \tau) \, dz \,d\sigma \,d\tau \, d\tilde\sigma \,d\tilde\tau\\
 &=\int e^{i\Phi} \left(h(z)+(h(x)-h(z))\right) \sigma_i a_{M,M'}(\tau, \sigma) b_{\tilde M \tilde M'}(\tilde \sigma, \tilde \tau) \, dz \,d\sigma \,d\tau \, d\tilde\sigma \,d\tilde\tau
\end{split}
\end{equation}
preserves the Sobolev space.
Now, we can write 
$$e^{i\Phi}h(z)=\frac{\partial}{\partial \tau} e^{i\Phi}$$ 
and
$$e^{i\Phi}(h(x)-h(z))=e^{i\Phi}\left((x-z)H(x,z)\right)=H(x,z) \frac{\partial}{\partial \sigma} e^{i\Phi}.$$
In the last expression, $H(x, z)$ comes from Taylor's expansion and is therefore $C^{\infty}$ and harmless for the Sobolev space behavior of the integral. 
Thus, we can rewrite (\ref{eq-p1}) as
\begin{equation}
\begin{split}
&\int e^{i\Phi} \frac{\partial}{\partial \tau}\left(\sigma_i a_{M, M'}(\tau, \sigma)\right)b_{\tilde M, \tilde M'}(\tilde \sigma, \tilde \tau) \, dz \, d\sigma \, d\tau \,d\tilde\sigma \,d\tilde\tau \\ 
+&\int e^{i\Phi} \frac{\partial}{\partial \sigma}\left(H(x,z) a_{M, M'}(\tau, \sigma)\right)b_{\tilde M, \tilde M'}(\tilde \sigma, \tilde \tau) \, dz \, d\sigma \, d\tau \,d\tilde\sigma \,d\tilde\tau.
\end{split}
\end{equation}
Since $\abs{\tau}$ is the larger variable relative to $\abs{\sigma}$, the above amplitudes have behavior no worse than the original amplitudes and therefore the Sobolev space is preserved. 

Now, we look at the second generator with symbol $p_2(x,y,\xi, \eta)=(x_j-y_j)\xi_i$, $1 \leq i,j \leq n$. The action of the pseudodifferential operator is expressed as:
\begin{equation} \label{eq-p2}
 P_2(x, y, D) K_{A \circ B} (x,y) = \int e^{i\Phi} (x_j-y_j) \sigma_i a_{M,M'}(\tau, \sigma) b_{\tilde M \tilde M'}(\tilde \sigma, \tilde \tau) \, dz \,d\sigma \,d\tau \, d\tilde\sigma \,d\tilde\tau.
\end{equation}
Now, we represent:
\begin{equation*}
 e^{i\Phi}(x_j-y_j)=(x_j-z_j+z_j-y_j) e^{i\Phi}=(\frac{\partial}{\partial \sigma_j}+\frac{\partial}{\partial \tilde\sigma_j})e^{i\Phi}.
\end{equation*}
Thus, (\ref{eq-p2}) becomes:
\begin{equation}
 \begin{split}
&\int e^{i\Phi} \frac{\partial}{\partial \sigma_j}(\sigma_i a_{M, M'}(\tau, \sigma))b_{\tilde M, \tilde M'}(\tilde \sigma, \tilde \tau) \, dz \, d\sigma \, d\tau \,d\tilde\sigma \,d\tilde\tau \\ 
+&\int e^{i\Phi} (\sigma_i a_{M, M'}(\tau, \sigma)) \frac{\partial}{\partial \tilde \sigma_j} b_{\tilde M, \tilde M'}(\tilde \sigma, \tilde \tau) \, dz \, d\sigma \, d\tau \,d\tilde\sigma \,d\tilde\tau.
  \end{split}
\end{equation}
Clearly, the first summand has no impact on the Sobolev space $H^s$. For the second summand we have to rely on the support of $\psi_2$, where $\abs{\tilde\sigma}\sim \abs{\tau}$. The order of the new amplitude is one worse in $\langle \sigma \rangle$, but one better is $\langle \tilde \sigma \rangle$, but $\abs{\sigma}<\frac{1}{c_3}\abs{\tau}<\epsilon_4\abs{\tilde \sigma}$, which implies the Sobolev space cannot have become any worse, i.e., it is preserved. 

Finally, we analyze the last generator, with symbol
$$p_3(x,y,\xi, \eta)=\frac{\partial h}{\partial y_j}(\xi_i+\eta_i)-\frac{\partial h}{\partial y_i}(\xi_j+\eta_j), i\neq j, 1\leq i,j \leq n,$$
where we have interchanged $x$ and $y$, because on $\Lambda$, $x=y$. Then:
\begin{equation} \label{eq-p3}
\begin{split}
 &P_3(x, y, D) K_{A \circ B} (x,y) \\
 &= \int e^{i\Phi} \left(\frac{\partial h}{\partial y_j}(\sigma_i-\tilde\sigma_i)-\frac{\partial h}{\partial y_i}(\sigma_j-\tilde\sigma_j)\right) a_{M,M'}(\tau, \sigma) b_{\tilde M \tilde M'}(\tilde \sigma, \tilde \tau) \, dz \,d\sigma \,d\tau \, d\tilde\sigma \,d\tilde\tau.
\end{split}
\end{equation}
Writing partial derivatives with subscript notation, i.e., $h_i(z)$ for $\frac{\partial h}{\partial z_i}$, note that:
\begin{equation}
 \frac{\partial}{\partial z_i} e^{i\Phi}=(-\sigma_i+\tilde\sigma_i+h_i(z)(\tau-\tilde\tau)) e^{i\Phi},
\end{equation}
and so
\begin{equation} \label{eq-hjy}
 (\sigma_i-\tilde\sigma_i) e^{i\Phi} = \left(-\frac{\partial}{\partial z_i} + h_i(z)(\tau-\tilde\tau)\right) e^{i\Phi},
\end{equation}
\begin{equation}\label{eq-hiy}
 (\sigma_j-\tilde\sigma_j) e^{i\Phi} = \left(-\frac{\partial}{\partial z_j} + h_j(z)(\tau-\tilde\tau)\right) e^{i\Phi}.
\end{equation}
Now, using Taylor's expansion, we can write:
\begin{equation}
 h_j(y)=h_j(z)+h_j(y)-h_j(z)=h_j(z) + \sum_{k=1}^n H_{jk}(y_k-z_k),
\end{equation}
\begin{equation}
 h_i(y)=h_i(z)+h_i(y)-h_i(z)=h_i(z) + \sum_{k=1}^n H_{ik}(y_k-z_k),
\end{equation}
where $H_{ik}$ and $H_{jk}$ are smooth functions, depending on $y_k$ and $z_k$.

We rewrite our multiplier from equation (\ref{eq-p3}) together with the oscillatory term $e^{i\Phi}$ to prepare for integration by parts:
\begin{equation}
 \begin{split}
&\left(h_j(y)(\sigma_i-\tilde\sigma_i)-h_i(y)(\sigma_j-\tilde\sigma_j)\right)e^{i\Phi}\\
&\\
&=(h_j(z)+(h_j(y)-h_j(z)))(\sigma_i-\tilde\sigma_i)-(h_i(z)+(h_i(y)-h_i(z)))(\sigma_j-\tilde\sigma_j))e^{i\Phi}\\
&\\
&= (h_j(z)(\sigma_i-\tilde\sigma_i)-h_i(z)(\sigma_j-\tilde\sigma_j))e^{i\Phi}\\
&\quad+\sum_{k=1}^n H_{jk} (y_k-z_k)(\sigma_i-\tilde\sigma_i)e^{i\Phi}\\
&\quad+\sum_{k=1}^n H_{ik} (y_k-z_k)(\sigma_j-\tilde\sigma_j)e^{i\Phi}\\
&\\
&=\left(h_j(z)\left(-\frac{\partial}{\partial z_i} + h_i(z)(\tau-\tilde\tau)\right)-h_i(z)\left(-\frac{\partial}{\partial z_j} + h_j(z)(\tau-\tilde\tau)\right)\right) e^{i\Phi} \\
&\quad+ \sum_{k=1}^n H_{jk}(-\frac{\partial}{\partial \tilde\sigma_k})(\sigma_i-\tilde\sigma_i) e^{i\Phi} - H_{ji}e^{i\Phi} \\
&\quad+ \sum_{k=1}^n H_{ik}(-\frac{\partial}{\partial \tilde\sigma_k})(\sigma_j-\tilde\sigma_j) e^{i\Phi} - H_{ij}e^{i\Phi} \\
&\\
&= \left(- h_j(z) \frac{\partial}{\partial z_i} + h_i(z) \frac{\partial}{\partial z_j}\right)e^{i\Phi}\\
&\quad+ \sum_{k=1}^n H_{jk}(-\frac{\partial}{\partial \tilde\sigma_k})\left(-\frac{\partial}{\partial z_i} + h_i(z)(\tau-\tilde\tau)\right) e^{i\Phi} \\
&\quad+ \sum_{k=1}^n H_{ik}(-\frac{\partial}{\partial \tilde\sigma_k})\left(-\frac{\partial}{\partial z_j} + h_j(z)(\tau-\tilde\tau)\right) e^{i\Phi} \\
&\quad- H_{ji}e^{i\Phi}- H_{ij}e^{i\Phi}.
 \end{split}
\end{equation}


The adjoint of the first term in the sum only depends on the spatial variable $z$ and has no effect on the phase variables of the amplitude. 
Applying the adjoint of the second term to the amplitude, after absorbing the smooth term $H$ into the amplitude, can be written as:
\begin{equation*}
 \begin{split}
  &\left(\frac{\partial}{\partial z_i} - h_i(z)(\tau-\tilde\tau)\right)\left(\frac{\partial}{\partial \tilde\sigma_k} a_{M,M'}(\tau, \sigma)b_{\tilde M, \tilde M'} (\tilde \sigma, \tilde \tau)\right)\\
  =&\left(\frac{\partial}{\partial z_i} - h_i(z)\tau+h_i(z)\tilde\tau\right)\left(\frac{\partial}{\partial \tilde\sigma_k} a_{M,M'}(\tau, \sigma)b_{\tilde M, \tilde M'} (\tilde \sigma, \tilde \tau)\right).
 \end{split}
\end{equation*}
This also leads to no worse Sobolev space, because $\abs{\tilde \tau} \lesssim \abs{\tilde \sigma}$ and $\abs{\tilde \sigma} \sim \abs{\tau}$. 
The same argument holds for the third term, where the roles of $i$ and $j$ are interchanged. In the last term, the smooth functions $H$ can be considered as symbols of order $0$, and they can be absorbed into the amplitude, leading to integrals of the same type as the original integral.

This concludes the iterated regularity proof. Namely $A\circ B \in I^{p''}(C_{\Sigma})$, for some $p''$. A very similar iterated regularity argument gives the case of $A \in I^p(C_{\Sigma}).$

In order to obtain the order of $p''$, we make use of the fact that 
$$I^{p,l}(C, C_{\Sigma})\hookrightarrow I^p(C_{\Sigma} \setminus C),$$
regardless of whether the first canonical relation $C$ is $C_0$ as in $A$ or $\Delta$ as in $B$. In addition, we use the fact that 
$$I^p(C_{\Sigma})\circ I^{p'}(C_{\Sigma})=I^{p+p'+\frac{1}{2}}(C_{\Sigma}).$$
These two facts immediately lead to the conclusion that $p''=p+p'+\frac{1}{2}$, regardless of whether $A\in I^{p,l}(C_0, C_{\Sigma})$ or $A\in I^{p}(C_{\Sigma})$, whenever $B \in I^{p',l'}(\Delta, C_{\Sigma})$. This concludes the proof of Theorem \ref{thm-composition}.

Let's recall the operators to which we need to apply Theorem \ref{thm-composition}. We were concerned with the compositions $A_2 \circ B$, $A_3^{\mu} \circ B$, and $A_3^{\mu+1} \circ B$. We have already shown that these operators belong to the following classes: 
\begin{equation}
 A_2 \in I^{\mu+\frac{5}{2}}(C_{\Sigma}), 
\end{equation}
\begin{equation}
 A_3^{\mu} \in I^{\mu+\frac{5}{2}, -2-\frac{n}{2}}(C_0, C_{\Sigma}), 
\end{equation}
\begin{equation}
 A_3^{\mu+1} \in I^{\mu+\frac{5}{2}, -1-\frac{n}{2}}(C_0, C_{\Sigma}), 
\end{equation}
and 
\begin{equation}
 B \in I^{\mu-\frac{3}{2}, -\mu-\frac{1}{2}}(\Delta, C_{\Sigma}).
\end{equation}
This means that $p=\mu+\frac{5}{2}$ for all of the above $A$, and $p'=\mu-\frac{3}{2}$, and therefore $A\circ B \in I^{2\mu+\frac{3}{2}}(C_{\Sigma})$. This is indeed the exact same  smoothing class, up to which we saw inversion of $(A_1^2+A_1^1)$ is possible. Therefore, the iteratively constructed operator $B$ is indeed an approximate Green's function for $L_{\gamma}$.

\section{Composition of Fourier Integral Operators in the Presence of a Zero Section}\label{Ch-FIO}
The standard clean intersection calculus assumes canonical relations avoiding the zero section altogether, with the only exception being \cite{Greenleaf01}. The canonical relation, $C_{\Sigma}$, described below, does not avoid the zero section, and this section investigates the composition of the operators associated to $C_{\Sigma}$. We begin by describing the class of Fourier integral operators $I^m(C_{\Sigma})$. Let $M$ be $n$-dimensional smooth manifold and let $S=\{h(x)=0\}$, where $h$ is a defining function. Then 
$S$ is a smooth submanifold of $M$ of codimension 1. Let $\Sigma=T^*M\vert_S=\{(x, \xi) \in T^*M\setminus 0\} \vert h(x)=0\}$ and $C_{\Sigma}\subset T^*M \times T^*M$ its flowout relation, generated by the Hamiltonian vector field $H_h=-\grad h\cdot\frac{d}{d\xi}$. To be specific $C_{\Sigma}$ is the canonical relation
$$C_{\Sigma}=\{ (x, \xi, y, \eta) \vert x\in S, y=x, \eta=\xi - t \grad h(x), t\in \RR \}.$$
Notice that this canonical relation can intersect the zero section both on the left (when $\xi=0$) and on the right (when $\eta=0$), but not both at the same time.
An operator $A$ is in $I^m(C_{\Sigma})$ if its kernel has a representation:
\begin{equation}\label{eq-K_A}
K_A(x,y):=\int_{\RR^{n+1}} e^{i[(x-y)\cdot \xi + h(x)\theta]} a (x,y;(\xi, \theta))\,d\xi\,d\theta,
\end{equation}
where $a \in S_{1,0}^M (\RR^{2n}; \RR^{n+1})$ is a standard symbol. The order of the symbol $a$ is $M=m-(n+1)/2+2n/4=m-1/2.$

\bigskip
In this section we state and prove the following theorem: 

\begin{theorem} \label{thm-FIO-0sec}
 Let $A_j \in I^{m_j}(C_{\Sigma})$, $m_j<-\frac{1}{2}$, for $j=1,2$, where we define
 $$C_{\Sigma}=\{(x, \sigma, x, \sigma-\tilde\rho \grad h(x)) \vert x\in S, \sigma \in \RR^n, \tilde\rho \in \RR, (\sigma, \tilde\rho) \neq(0^n,0)\}$$
 Then:
 $$A_1^*A_2 \in I^{p,l_1}(C_0, C_{\Sigma})+I^{p,l_2}(C_0^t, C_{\Sigma}),$$
 where 
 $$C_0:=\{(x,0, y, \rho \grad h(y)) \vert x \in \RR^n, y \in S, \rho \in \RR\setminus 0\}$$
 and
$$C_0^t:=\{(x,\rho \grad h(x), y, 0) \vert x \in S, y \in \RR^n, \rho \in \RR\setminus 0 \}$$
$$p=m_1+m_2+\frac{1}{2}$$ 
$$l_j=-\frac{n+1}{2}-m_j.$$
\end{theorem}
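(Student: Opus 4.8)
The plan is to realise $A_1^*A_2$ as a single oscillatory integral, cut frequency space with a conic partition of unity, and handle each piece with the two devices already used in Section~\ref{Ch-Green} --- a nondegenerate stationary phase (structurally identical to \eqref{eq-phi1}--\eqref{eq-klgamma}) and, where needed, iterated regularity --- fixing the orders afterwards via the single-Lagrangian restrictions of Properties~\ref{prop-ipl-caps} and~\ref{prop-inclusions}. Since $C_\Sigma=C_\Sigma^t$ we have $A_1^*\in I^{m_1}(C_\Sigma)$, so writing $K_{A_1^*}(x,z)=\int e^{i[(x-z)\cdot\xi+h(z)\theta]}b_1(x,z;(\xi,\theta))\,d\xi\,d\theta$ and $K_{A_2}(z,y)=\int e^{i[(z-y)\cdot\eta+h(z)\omega]}a_2(z,y;(\eta,\omega))\,d\eta\,d\omega$ with $b_1\in S^{m_1-1/2}$, $a_2\in S^{m_2-1/2}$,
\[
K_{A_1^*A_2}(x,y)=\int e^{i\Phi}\,b_1\,a_2\;dz\,d\xi\,d\theta\,d\eta\,d\omega,\qquad \Phi=(x-z)\cdot\xi+(z-y)\cdot\eta+h(z)(\theta+\omega).
\]
The hypothesis $m_j<-\tfrac12$ --- i.e.\ $m_1-\tfrac12,\,m_2-\tfrac12<-1$, hence also $m_1+m_2<0$ --- is exactly what makes the one-dimensional frequency integral occurring below absolutely convergent, and it is used repeatedly in the symbol estimates.

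In the variables $(z,\eta)$ the phase $\Phi$ has the unique nondegenerate critical point $z=y$, $\eta=\xi-\nabla h(y)(\theta+\omega)$, with Hessian of determinant $\pm1$ --- exactly as in \eqref{eq-phi1}--\eqref{eq-klgamma}. Applying the associated transport operator to $b_1a_2$ and evaluating at the critical point gains a factor $\langle\xi-\nabla h(y)(\theta+\omega),\,\omega\rangle^{-1}$, a genuine gain relative to the total frequency size \emph{except} on the cone where $\xi$ is collinear with $\nabla h(y)$, $\theta+\omega\approx\xi\cdot\nabla h(y)$ is large and $\omega$ is bounded. Choosing a conic partition of unity that isolates a neighbourhood of that exceptional cone, on its complement the $(z,\eta)$ stationary phase is legitimate and yields, modulo a smoothing operator and with $\lambda:=\theta+\omega$,
\[
K_{A_1^*A_2}(x,y)\sim\int e^{i[(x-y)\cdot\xi+h(y)\lambda]}\Bigl(\int_{\RR}b_1(x,y;(\xi,\lambda-\omega))\,a_2(y,y;(\xi-\nabla h(y)\lambda,\,\omega))\,d\omega\Bigr)\,d\xi\,d\lambda,
\]
in which $(x-y)\cdot\xi+h(y)\lambda$ parametrises $C_\Sigma$ with output covector $\xi$ and input covector $\xi-\nabla h(y)\lambda$. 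The excised neighbourhood is precisely the region where the input covector degenerates to $0$ while all frequencies stay large; there one integrates by parts in $z$ (the phase gradient being nonvanishing in the remaining directions) and is left with a kernel conormal to $S$ in $x$ and smooth in $y$ --- the $C_0^t$-branch. Most cleanly, this branch is what comes out of the identity $A_1^*A_2=(A_2^*A_1)^*$: run the whole argument on $A_2^*A_1$ with $A_1,A_2$ interchanged and take the adjoint, which turns the $C_0$-branch of $A_2^*A_1$ into a $C_0^t$-branch of $A_1^*A_2$.

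It remains to split the reduced integral with a conic cut in $(\xi,\lambda,\omega)$. Where the output covector $\xi$ is comparable to the total frequency, a direct estimate (using $b_1\in S^{m_1-1/2}$, $a_2\in S^{m_2-1/2}$ and $m_j<-\tfrac12$) shows the inner $\omega$-integral is a symbol of order $m_1+m_2$ in $(\xi,\lambda)$, so this piece lies in $I^{m_1+m_2+1/2}(C_\Sigma)$ --- by the flowout composition $I^{m_1}(C_\Sigma)\circ I^{m_2}(C_\Sigma)=I^{m_1+m_2+1/2}(C_\Sigma)$ (\cite{Duistermaat75}, as used throughout Section~\ref{Ch-Green}; cf.\ Theorem~\ref{thm-Antoniano}) --- and hence, by Property~\ref{prop-ipl-caps}(i), in $I^{p,l}(C_0,C_\Sigma)$ for every $l$, with $p=m_1+m_2+\tfrac12$. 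Where $\xi\to0$ (the approach to $C_0$, along which the output covector vanishes), taking $\xi$ itself as the excess variable and Taylor-expanding the phase gives the multiphase $(x-y)\cdot\xi+\lambda h(y)+O(|x-y|^2\lambda)$, which parametrises the cleanly intersecting pair $(C_0,C_\Sigma)$ with one base variable $\lambda$ and $k=n$ excess variables $\xi$ ($k=n$ being the codimension of $C_0\cap C_\Sigma$ in either Lagrangian). The same estimate shows the inner $\omega$-integral lies in $S^{m_2-1/2,\;m_1+1/2}(\RR^{2n};\RR^1_\lambda\setminus 0,\RR^n_\xi)$; converting through $M=(p+l)-\tfrac{N-k}{2}+\tfrac{2n}{4}$ and $M'=-l-\tfrac k2$ with $N=n+1$, $k=n$ reproduces $p=m_1+m_2+\tfrac12$ and $l_1=-\tfrac{n+1}{2}-m_1$ --- equivalently, $u|_{C_\Sigma\setminus C_0}\in I^{p}(C_\Sigma)$ and $u|_{C_0\setminus C_\Sigma}\in I^{p+l_1}(C_0)$ by Property~\ref{prop-inclusions}. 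By the symmetry of the previous paragraph the $C_0^t$-branch lies in $I^{p,l_2}(C_0^t,C_\Sigma)$ with $l_2=-\tfrac{n+1}{2}-m_2$. Adding the three pieces (folding the smoothing error and the $I^p(C_\Sigma)$ piece into the $C_0$-term) gives $A_1^*A_2\in I^{p,l_1}(C_0,C_\Sigma)+I^{p,l_2}(C_0^t,C_\Sigma)$. Membership in $\bigcup_{p,l}I^{p,l}(C_0,C_\Sigma)$, resp.\ in $\bigcup_{p,l}I^{p,l}(C_0^t,C_\Sigma)$, can alternatively be established by iterated regularity (Theorem~\ref{thm-itreg-2}), using the $\Psi$DOs cutting out $C_\Sigma$ as in Section~\ref{sec-Composition} plus the extra generator $|\xi|$, resp.\ $|\eta|$, by the same bookkeeping as in the proof of Theorem~\ref{thm-composition}, now exploiting that the dominant frequency on the branch is $\lambda$.

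The main obstacle is the reduction step: delimiting precisely the region in which the $(z,\eta)$ stationary phase is admissible versus the exceptional cone, and then verifying that the leftover one-dimensional $\omega$-integral is a genuine symbol-valued symbol of the claimed bidegree --- that is, controlling all of its $(\xi,\lambda)$- and $\xi$-derivatives uniformly, which is exactly where $m_j<-\tfrac12$ enters. Getting the excess right ($k=n$), and hence the conversion between the symbol bidegree $(M,M')$ and the pair $(p,l)$, is what forces the precise orders in the statement; everything else is routine integration-by-parts bookkeeping of the kind already carried out in Section~\ref{Ch-Green}.
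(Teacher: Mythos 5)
Your proposal reaches the right target orders and uses the same basic ingredients — a nondegenerate stationary phase that collapses the $z$-integral and one of the frequency vectors, a one-dimensional integral over the remaining scalar phase variable producing a symbol-valued symbol, Mendoza's multiphase picture for $(C_0,C_\Sigma)$, and a splitting into a $C_0$-branch and a $C_0^t$-branch — but you organize them in a genuinely different order than the paper does. The paper performs the one-dimensional $\rho$-integral \emph{first}, defining $a_3$ as a partial convolution of the two symbols and proving a detailed estimate for it (Lemma~\ref{lemma-a3}, a case analysis over five conic regions in $(\sigma,\tilde\sigma,\tilde\rho)$ and four intervals in $\rho$). Only then does it cut into $\Omega_{good}$/$\Omega_{bad}$ and $\Gamma_{b,1}$/$\Gamma_{b,2}$, and it does stationary phase on $\Omega_{good}$ in $(z,\tilde\sigma)$ to produce the $(C_0,C_\Sigma)$ piece and a \emph{second, different} stationary phase on $\Gamma_{b,1}$ in $(z,\sigma)$ to produce the $(C_0^t,C_\Sigma)$ piece (with $\Gamma_{b,2}$ killed by integration by parts in $z$). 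The convolution-before-phase ordering buys you something real: the cutoff into $\Omega_{good}$/$\Omega_{bad}$ only involves the surviving frequencies $(\sigma,\tilde\rho)$ and $z$, so it sits cleanly alongside the stationary phase. Your cut is made \emph{before} the $\omega$-integral has been performed and therefore depends on $\omega$; once you restore the $\omega$-integration you are integrating over an $\omega$-dependent frequency region, which makes the conversion of the inner integral into a bona fide $S^{M,M'}$ symbol considerably more delicate than the two sentences you devote to it.

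There are two concrete places where the proposal as written has a gap. First, and most importantly, you flag the heart of the argument — ``verifying that the leftover one-dimensional $\omega$-integral is a genuine symbol-valued symbol of the claimed bidegree'' — as the main obstacle and then never do it; this is precisely what Lemma~\ref{lemma-a3} of the paper supplies, and it is not ``routine integration-by-parts bookkeeping'': the point of the lemma is that derivatives in $\tilde\rho$ do not simply fall on one factor but require a mean-value/Taylor refinement on the central interval $K_2$ to produce the needed extra decay, and a separate argument is needed on each of the five frequency regimes. Without that estimate, neither of your two $(p,l)$-conversions is established, so you have the right exponents but not a proof that they are attained. Second, your treatment of the excised cone is not correct as stated. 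Inside the exceptional region you propose to ``integrate by parts in $z$ (the phase gradient being nonvanishing in the remaining directions)''; but $\Phi_z=-\xi+\eta+\nabla h(z)(\theta+\omega)$ vanishes whenever $\eta=\xi-\nabla h(z)(\theta+\omega)$, and $\eta$ is still a free integration variable there, so $\nabla_z\Phi$ is \emph{not} bounded away from zero. What the paper actually does on $\Gamma_{b,1}$ is a \emph{different} stationary phase, now in $(z,\sigma)$ rather than $(z,\tilde\sigma)$, after which the remaining one-dimensional symbol-valued-symbol estimate yields the $(C_0^t,C_\Sigma)$ piece with $l_2=-\tfrac{n+1}{2}-m_2$. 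Your alternative suggestion of exploiting $A_1^*A_2=(A_2^*A_1)^*$ is a nice observation and does correctly predict that swapping $m_1\leftrightarrow m_2$ and transposing $C_0$ should give the $C_0^t$-branch, but on its own it only recycles the same incomplete argument: to use it you still have to show that, after your excision, what is left is exactly an $I^{p,l_1}(C_0,C_\Sigma)$ object and nothing more, which again comes down to the uncarried-out symbol estimate.
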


\bigskip
For $A_1 \in I^{m_1}(C_{\Sigma})$, $A_2 \in I^{m_2}(C_{\Sigma})$ with $m_1, m_2 < -\frac{1}{2}$, both properly supported operators, we would like to analyze the composition kernel $K_{A_1^*A_2}$, where $^*$ denotes the adjoint operator.
\begin{equation}\label {eq-composition}
K_{A_1^*A_2}(x,y) = \int_{\RR^n} K_{A_1^*}(x,z)K_{A_2}(z,y)\,dz,
\end{equation}
where
\begin{equation}\label{eq-adjKernel}
 K_{A_1^*}(x,z)=\overline{K_{A_1}(z,x)}=\int_{\RR^{n+1}} e^{i[(x-z)\cdot \sigma - h(z)\tau]}\overline{a_1}(z,x;(\sigma, \tau))\,d\sigma\, d\tau.
\end{equation}

Plugging equations (\ref{eq-K_A}) and (\ref{eq-adjKernel}) into equation (\ref{eq-composition}), we obtain:
\begin{equation}
\begin{split}
 &K_{A_1^*A_2}(x,y) \\
 &= \int_{\RR^{3n+2}} e^{i[(x-z)\cdot \sigma + (z-y)\cdot \tilde{\sigma}+h(z)(\tilde{\tau}-\tau)]}\overline{a_1}(z,x; (\sigma, \tau))a_2(z,y; (\tilde{\sigma}, \tilde{\tau}))\,d\sigma\, d\tau\, d\tilde{\sigma}\, d\tilde{\tau} dz.
\end{split}
 \end{equation}
 
With the change of variables, $\rho = \tau$ and $\tilde{\rho}=\tilde{\tau}-\tau$, the above becomes:
\begin{equation}
  \begin{split}
 &K_{A_1^*A_2}(x,y)\\
 &= \int_{\RR^{3n+1}} e^{i[(x-z)\cdot \sigma + (z-y)\cdot \tilde{\sigma}+h(z)\tilde{\rho}]} \left(\int_{\RR} \overline{a_1}(z,x; (\sigma, \rho))a_2(z,y; (\tilde{\sigma}, \tilde{\rho}+\rho))d\rho \right) \,d\tilde{\rho}\, d\sigma\, d\tilde{\sigma}\, dz \\
 &=\int_{\RR^{3n+1}} e^{i[(x-z)\cdot \sigma + (z-y)\cdot \tilde{\sigma}+h(z)\tilde{\rho}]} a_3(x,y,z; \sigma, \tilde{\sigma}, \tilde{\rho}) \,d\tilde{\rho}\,d\sigma \,d\tilde{\sigma}\,dz, 
  \end{split}
 \end{equation}
 where $$a_3(x,y,z; \sigma, \tilde{\sigma}, \tilde{\rho}):=\int_{\RR} \overline{a_1}(z,x; (\sigma, \rho))a_2(z,y; (\tilde{\sigma}, \tilde{\rho}+\rho))\,d\rho.$$ 
 
 In order to understand the composition kernel, we first set out to understand $a_3$. With a change of variable and relabeling $a_1(z,x; (\sigma, \rho)):=\overline{a_1}(z,x; (\sigma, -\rho)))$, $a_3$ can be seen as a partial convolution of symbols,
 \begin{equation}\label{eq-a3}
 \displaystyle a_3(x,y,z; \sigma, \tilde{\sigma}, \tilde{\rho}):=\int_{\RR} {a_1}(z,x; (\sigma, \rho))a_2(z,y; (\tilde{\sigma}, \tilde{\rho}-\rho))\,d\rho.
 \end{equation}
 Here the symbols satisfy the estimates specified in the lemma below.   
 
 \begin{lemma}\label{lemma-a3}
  Let $a_3(x,y,z; \sigma, \tilde{\sigma}, \tilde{\rho}):=\int_{\RR} {a_1}(z,x; (\sigma, \rho))a_2(z,y; (\tilde{\sigma}, \tilde{\rho}-\rho))\,d\rho$ with $a_1$ and $a_2$ satisfying the symbol estimates:
$$\abs{\partial_{x,z}^{\gamma} \partial_{\sigma}^{\alpha}\partial_{\rho}^{\beta} a_1(z,x; (\sigma, \rho))} \leq C_{\alpha \beta \gamma} \langle \sigma, \rho  \rangle^{M-|\alpha|-\beta},$$  $$\abs{\partial_{y,z}^{\gamma} \partial_{\tilde\sigma}^{\alpha}\partial_{\tilde\rho}^{\beta} a_2(z, y; (\tilde\sigma, \tilde\rho))} \leq C_{\alpha \beta \gamma} \langle \tilde\sigma, \tilde\rho  \rangle^{\tilde{M}-|\alpha|-\beta},$$
  where $M, \tilde{M}<-1$. Then: 
  \begin{equation} \label{orig-est}
\displaystyle |a_3 (x,y,z; \sigma, \tilde{\sigma}, \tilde{\rho})| \lesssim 
\begin{cases}
  \displaystyle \langle \tilde{\rho}\rangle^{\tilde{M}}\langle \sigma\rangle^{M+1}+\langle \tilde{\rho}\rangle^{M}\langle \tilde\sigma\rangle^{\tilde M+1} & \text{on}\ \Gamma_1:=\{\abs{\tilde{\rho}} \gtrsim \max(\abs{\sigma},\abs{\tilde{\sigma}})\} \\
  \displaystyle \frac{\langle \sigma \rangle^{M+1} \langle \tilde{\sigma} \rangle ^{\tilde{M}+1}}{\langle \sigma, \tilde{\sigma} \rangle} & \text{on}\ \Gamma_2:=\{\abs{\tilde{\rho}} \lesssim \max(\abs{\sigma},\abs{\tilde{\sigma}})\} 
\end{cases}
\end{equation}

Moreover, $\abs{\partial_{x,y,z}^{\delta}\partial_{\sigma}^{\gamma}\partial_{\tilde\sigma}^{\beta}\partial_{\tilde\rho}^{\alpha}a_3} \leq C_{\alpha \beta \gamma \delta} \text{(original estimate)} \langle \sigma \rangle^{-\abs{\gamma}}\langle\tilde \sigma \rangle^{-\abs{\beta}}\langle \sigma ,\tilde \sigma, \tilde \rho \rangle^{-\alpha}$,
where (original estimate) refers to equation (\ref{orig-est}).
\end{lemma}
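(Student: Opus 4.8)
The strategy is to reduce the claim to two elementary one--variable facts and then to split the $\rho$--integral defining $a_3$ according to the relative sizes of $\abs{\sigma}$, $\abs{\tilde\sigma}$, $\abs{\rho}$ and $\abs{\tilde\rho}$.

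\textbf{Step 1 (elementary inputs).} First I record two facts. (a) For any exponent $a<-1$ and any $\nu$ in a Euclidean space, $\int_{\RR}\langle\nu,\rho\rangle^{a}\,d\rho\lesssim\langle\nu\rangle^{a+1}$, which follows from the rescaling $\rho=\langle\nu\rangle s$ and the convergence of $\int_{\RR}(1+s^2)^{a/2}\,ds$; applied to the hypotheses on $a_1,a_2$ (recall $M,\tilde M<-1$) this gives $\int_{\RR}\langle\sigma,\rho\rangle^{M}\,d\rho\lesssim\langle\sigma\rangle^{M+1}$ and $\int_{\RR}\langle\tilde\sigma,\tilde\rho-\rho\rangle^{\tilde M}\,d\rho\lesssim\langle\tilde\sigma\rangle^{\tilde M+1}$. (b) The sub--multiplicativity $\langle\sigma,\tilde\sigma,\tilde\rho\rangle\lesssim\langle\sigma,\rho\rangle\,\langle\tilde\sigma,\tilde\rho-\rho\rangle$, valid for every $\rho$, which follows from $\abs{\tilde\rho}\le\abs{\rho}+\abs{\tilde\rho-\rho}$ together with $\langle u\rangle\langle v\rangle\gtrsim\langle u\rangle+\langle v\rangle$. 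Fact (b) converts a factor $\langle\sigma,\rho\rangle^{-1}$ or $\langle\tilde\sigma,\tilde\rho-\rho\rangle^{-1}$ into $\langle\sigma,\tilde\sigma,\tilde\rho\rangle^{-1}$ precisely where one of the two partial brackets is comparable to the full one.

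\textbf{Step 2 (the pointwise bound (\ref{orig-est})).} Using only $\abs{a_1}\lesssim\langle\sigma,\rho\rangle^{M}$ and $\abs{a_2}\lesssim\langle\tilde\sigma,\tilde\rho-\rho\rangle^{\tilde M}$ one has $\abs{a_3}\lesssim\int_{\RR}\langle\sigma,\rho\rangle^{M}\langle\tilde\sigma,\tilde\rho-\rho\rangle^{\tilde M}\,d\rho$. On $\Gamma_1$ I split at $\abs{\rho}=\tfrac12\abs{\tilde\rho}$: where $\abs{\rho}\le\tfrac12\abs{\tilde\rho}$ one has $\langle\tilde\sigma,\tilde\rho-\rho\rangle\gtrsim\langle\tilde\rho\rangle$, so by (a) the contribution is $\lesssim\langle\tilde\rho\rangle^{\tilde M}\langle\sigma\rangle^{M+1}$; where $\abs{\rho}>\tfrac12\abs{\tilde\rho}$, since $\abs{\rho}\gtrsim\abs{\tilde\rho}\gtrsim\max(\abs{\sigma},\abs{\tilde\sigma})$ we get $\langle\sigma,\rho\rangle\sim\langle\rho\rangle\gtrsim\langle\tilde\rho\rangle$, so by (a) the contribution is $\lesssim\langle\tilde\rho\rangle^{M}\langle\tilde\sigma\rangle^{\tilde M+1}$. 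Summing gives the $\Gamma_1$ bound. On $\Gamma_2$, by symmetry assume $\abs{\sigma}=\max(\abs{\sigma},\abs{\tilde\sigma})$, so $\langle\sigma,\tilde\sigma\rangle\sim\langle\sigma\rangle$ and $\abs{\tilde\rho}\lesssim\abs{\sigma}$; choosing $R\sim\abs{\sigma}$ large enough I split at $\abs{\rho}=R$: where $\abs{\rho}\le R$ one has $\langle\sigma,\rho\rangle\sim\langle\sigma\rangle$, so by (a) the contribution is $\lesssim\langle\sigma\rangle^{M}\langle\tilde\sigma\rangle^{\tilde M+1}$; where $\abs{\rho}>R$ one has $\abs{\tilde\rho-\rho}\gtrsim\abs{\rho}$ and $\langle\sigma,\rho\rangle\sim\langle\rho\rangle\gtrsim\langle\sigma\rangle$, so both brackets are $\gtrsim\langle\rho\rangle$ and the contribution is $\lesssim\int_{R}^{\infty}\langle\rho\rangle^{M+\tilde M}\,d\rho\sim\langle\sigma\rangle^{M+\tilde M+1}\le\langle\sigma\rangle^{M}\langle\tilde\sigma\rangle^{\tilde M+1}$, the last step because $\abs{\tilde\sigma}\le\abs{\sigma}$ and $\tilde M+1<0$. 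Since $\langle\sigma\rangle^{M}\langle\tilde\sigma\rangle^{\tilde M+1}\sim\langle\sigma\rangle^{M+1}\langle\tilde\sigma\rangle^{\tilde M+1}/\langle\sigma,\tilde\sigma\rangle$, this is exactly the $\Gamma_2$ bound.

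\textbf{Step 3 (derivatives).} Differentiation under the integral sign is legitimate, the differentiated integrands being dominated at each stage by the integrable functions produced here (a short induction on the number of derivatives). Derivatives in $x,y,z$ land on $a_1,a_2$ with no loss, since the hypotheses permit arbitrarily many such derivatives, so they leave (\ref{orig-est}) unchanged. A $\sigma$--derivative lands on $a_1$ and produces a factor $\langle\sigma,\rho\rangle^{-1}\le\langle\sigma\rangle^{-1}$ that may be pulled out of the integral pointwise; iterating yields $\langle\sigma\rangle^{-\abs{\gamma}}$, and symmetrically $\langle\tilde\sigma\rangle^{-\abs{\beta}}$ for $\tilde\sigma$--derivatives. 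For $\partial_{\tilde\rho}^{\alpha}$ I use that $a_3$ is a convolution in its last slot, so each $\tilde\rho$--derivative may be attached to $a_1$ \emph{or} to $a_2$; I carry the decomposition of Step 2 over verbatim and in each region attach all $\alpha$ derivatives to whichever of $a_1,a_2$ carries the large frequency argument there. This produces the extra factor $\langle\tilde\rho\rangle^{-\alpha}$ throughout $\Gamma_1$, and $\langle\sigma\rangle^{-\alpha}$ (resp.\ $\langle\tilde\sigma\rangle^{-\alpha}$) on the part of $\Gamma_2$ where $\abs{\sigma}$ (resp.\ $\abs{\tilde\sigma}$) dominates, i.e.\ $\langle\sigma,\tilde\sigma,\tilde\rho\rangle^{-\alpha}$ in every case; mixed derivatives follow by combining these three mechanisms, with fact (b) keeping the bookkeeping clean.

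\textbf{Main obstacle.} The only delicate point is the $\partial_{\tilde\rho}$ gain: placed on $a_2$ a $\tilde\rho$--derivative yields only $\langle\tilde\sigma,\tilde\rho-\rho\rangle^{-1}$, which is useless in the region where $\abs{\tilde\rho-\rho}$ is small while $\abs{\sigma}$ is large. The resolution, already built into the region decomposition, is that in that very region $\langle\sigma,\rho\rangle$ is comparable to $\langle\sigma,\tilde\sigma,\tilde\rho\rangle$, so the derivative must instead be absorbed by $a_1$; obtaining the correct uniform power $\langle\sigma,\tilde\sigma,\tilde\rho\rangle^{-\alpha}$ from this redistribution is exactly where fact (b) of Step~1 is needed. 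Everything else is routine.
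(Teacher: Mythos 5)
Your pointwise bound (Step 2) is correct and is in fact a cleaner route than the paper's, which splits into five external conic regions and four $\rho$-intervals and then recombines; your two elementary facts plus a single split of the $\rho$-line per external region reproduce the same estimates. The $x,y,z$-, $\sigma$- and $\tilde\sigma$-derivative bounds, and the $\tilde\rho$-derivative bound on $\Gamma_2$ (where a \emph{single, global-in-$\rho$} attachment of the derivative to $a_1$ or to $a_2$ suffices, depending on whether $\abs{\sigma}$ or $\abs{\tilde\sigma}$ dominates), are also fine and agree with the paper.

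The gap is the $\tilde\rho$-derivative on $\Gamma_1$. The identity $\partial(f*g)=(\partial f)*g=f*(\partial g)$ is an identity for the \emph{full} convolution over $\RR$; it does not let you attach the derivative to $a_1$ on the sub-interval $\abs{\rho}>\tfrac12\abs{\tilde\rho}$ and to $a_2$ on $\abs{\rho}\le\tfrac12\abs{\tilde\rho}$, which is what your phrase ``attach all $\alpha$ derivatives to whichever of $a_1,a_2$ carries the large frequency argument there'' requires on $\Gamma_1$. Neither global choice works there: attaching to $a_2$ yields, on the range $\abs{\tilde\rho-\rho}\lesssim\abs{\tilde\rho}$ (where $\abs{\rho}\sim\abs{\tilde\rho}$), the contribution $\langle\tilde\rho\rangle^{M}\langle\tilde\sigma\rangle^{\tilde M}$, which is \emph{not} $\lesssim\langle\tilde\rho\rangle^{-1}\bigl(\langle\tilde\rho\rangle^{\tilde M}\langle\sigma\rangle^{M+1}+\langle\tilde\rho\rangle^{M}\langle\tilde\sigma\rangle^{\tilde M+1}\bigr)$ since $\abs{\tilde\rho}\gg\abs{\tilde\sigma}$ on $\Gamma_1$; attaching to $a_1$ fails symmetrically on $\abs{\rho}\lesssim\abs{\sigma}$. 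Your fact (b) cannot rescue this, because a single derivative produces only one of the two reciprocal brackets, not their product. To close the gap you must either (i) perform the switch with a smooth cutoff $\theta(\rho/\tilde\rho)$ and explicitly estimate the commutator terms where $\partial_\rho$ falls on the cutoff (these are acceptable precisely because $\abs{\partial_\rho\theta(\rho/\tilde\rho)}\lesssim\abs{\tilde\rho}^{-1}$, but this must be written out and iterated for $\alpha\ge 2$), or (ii) do what the paper does: attach the derivative globally to $a_1$, observe the gain fails only on the interval $\abs{\rho}\le\tfrac12\abs{\tilde\rho}$, and there exploit cancellation via $\int_{\RR}\partial_\rho a_1(\sigma,\rho)\,d\rho=0$ together with a first-order Taylor expansion of $a_2(\tilde\sigma,\tilde\rho-\rho)$ about $\rho=0$. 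Some such cancellation or cutoff bookkeeping is genuinely needed; as written, your argument silently assumes a pointwise-in-$\rho$ redistribution of the derivative that the convolution structure does not provide.
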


\begin{proof}

First, we divide the domain into 5 conic regions, based on the external parameters' relation to each other: 

I. $\abs{\sigma}, \abs{\tilde{\sigma}} \lesssim \abs{\tilde{\rho}}$

II. $\abs{\tilde{\rho}} \lesssim \abs{\sigma} \lesssim \abs{\tilde{\sigma}}$

III. $\abs{\tilde{\rho}} \lesssim \abs{\tilde{\sigma}} \lesssim \abs{\sigma}$

IV. $ \abs{\sigma} \lesssim \abs{\tilde{\rho}} \lesssim \abs{\tilde{\sigma}}$

V. $\abs{\tilde{\sigma}} \lesssim \abs{\tilde{\rho}} \lesssim \abs{\sigma}$

Throughout the text, we will use $\lesssim$ and $\gtrsim$ to mean comparison up to multiplicative constants, while $\sim$ means the ratio of the two quantities on the left and on the right is bounded below and above by a constant. For appropriate constants above, the intersections of these regions with the $2n$-dimensional sphere forms an open cover of the sphere. We introduce a partition of unity $\{\psi_j\}$ subordinate to this open cover and extend them as functions homogeneous of degree zero to the entire $2n+1$ dimensional space. The partition of unity then belongs to the standard symbol class $S^0$, and multiplication by the $\psi_j$ does not affect our symbol estimates.  

Without loss of generality, assume $\tilde\rho>0$. Since the integrating variable $\rho$ in (\ref{eq-a3}) is 1-dimensional, we divide the real line into four intervals. 

$K_1.$ $\abs{\rho}\geq \frac{3}{2}\tilde{\rho}$

$K_2.$ $-\frac{1}{2}{\tilde{\rho}}\leq \rho \leq \frac{1}{2}{\tilde{\rho}}$

$K_3.$ $\frac{1}{2}\tilde{\rho}\leq \rho \leq \frac{3}{2}\tilde{\rho}$ 

$K_4.$ $-\frac{3}{2}\tilde{\rho}\leq \rho \leq -\frac{1}{2}\tilde{\rho}$. 

For any of the regions I-V, the integral in (\ref{eq-a3}) is estimated as the sum over the four intervals:
\begin{equation}
 \abs{a_3}\leq \left(\int_{K_1}+\int_{K_2}+\int_{K_3}+\int_{K_4} \right)\abs{ {a_1}(z,x; (\sigma, \rho))}\abs{a_2(z,y; (\tilde{\sigma}, \tilde{\rho}-\rho))}d\rho
\end{equation}

We will refer to a particular summand as $R_j$ where $R \in \{I, II, III, IV, V\}$ and $j \in \{K_1,K_2,K_3,K_4\}$. 
For example $\abs{a_3}\leq I_{K_1}+I_{K_2}+I_{K_3}+I_{K_4}$ in region I, $\abs{a_3} \leq II_{K_1}+II_{K_2}+II_{K_3}+II_{K_4}$ in region II, etc. 

On interval 1 (for all five regions), $\frac{1}{3}\abs{\rho}\leq \abs{\tilde{\rho}-\rho} \leq \frac{5}{3}\abs{\rho}$, which means $\langle \tilde\sigma, \tilde{\rho}-\rho \rangle \sim \langle\tilde \sigma, \rho \rangle$. In particular, $\frac{1}{3} \langle \tilde\sigma, \rho \rangle \rangle \leq \langle \tilde\sigma, \tilde{\rho}-\rho \rangle \rangle \leq \frac{5}{3} \langle \tilde\sigma, \rho \rangle \rangle$. Thus:
$R_{K_1} \lesssim \int_{\rho \geq \frac{3}{2} \tilde{\rho}} \langle \sigma, \rho \rangle^M \langle \tilde{\sigma}, \rho \rangle^{\tilde{M}}d\rho$. 

Estimating on the different regions, we get:

$I_{K_1} \lesssim \langle \tilde{\rho} \rangle ^{M + \tilde{M}+1}$ 

$II_{K_1} \lesssim \langle \sigma \rangle ^{M+1}\langle \tilde \sigma \rangle ^{\tilde{M}}$

$III_{K_1}\lesssim \langle \sigma \rangle ^{M}\langle \tilde \sigma \rangle ^{\tilde{M}+1}$

$IV_{K_1}\lesssim \langle \tilde\sigma \rangle ^{\tilde{M}}\langle \tilde{\rho} \rangle ^{M+1}$

$V_{K_1}\lesssim \langle \sigma \rangle ^{M}\langle \tilde \rho \rangle ^{\tilde{M}+1}$

\bigskip
On interval 2 (for all five regions), $\frac{1}{2}\abs{\tilde \rho} \leq \abs{\tilde\rho - \rho}\leq \frac{3}{2}\abs{\tilde{\rho}}$, which means $\abs{\tilde\rho - \rho} \sim \abs{\tilde\rho}$.
Thus, $R_{K_2} \lesssim \langle \tilde \sigma, \tilde\rho \rangle^{\tilde M} \int_{\abs{\rho} \leq \frac{1}{2}\abs{\tilde\rho}} \langle \sigma, \rho \rangle^{M} d\rho$. 

Now we estimate on the different regions to get: 

$I_{K_2} \lesssim \langle \sigma \rangle^{M+1} \langle \tilde\rho \rangle^{\tilde M}$

$II_{K_2}  \lesssim \langle \sigma \rangle^{M}\langle \tilde \sigma \rangle^{\tilde M} \langle \tilde\rho \rangle$

$III_{K_2} \lesssim \langle \sigma \rangle^{M}\langle \tilde \sigma \rangle^{\tilde M} \langle \tilde\rho \rangle$

$IV_{K_2} \lesssim \langle \sigma \rangle^{M+1}\langle \tilde \sigma \rangle^{\tilde M}$

$V_{K_2} \lesssim \langle \sigma \rangle^{M} \langle \tilde\rho \rangle^{\tilde M+1}$

\bigskip
On interval 3 (for all five regions) $\frac{1}{2}\abs{\tilde \rho} \leq \abs{\rho}\leq \frac{3}{2}\abs{\tilde{\rho}}$, which means $\langle \sigma, \rho \rangle \sim \langle \sigma, \tilde\rho \rangle$. Thus, $R_{K_3} \lesssim \langle \sigma, \tilde\rho \rangle^{M} \int_{\abs{\tilde\rho-\rho} \leq \frac{1}{2}\abs{\tilde\rho}} \langle \tilde \sigma, \tilde\rho-\rho \rangle^{\tilde M} d\rho$.

Now we estimate on the different regions to get: 

$I_{K_3} \lesssim \langle \tilde\sigma \rangle^{\tilde M+1} \langle \tilde\rho \rangle^{M}$

$II_{K_3}  \lesssim \langle \sigma \rangle^{M}\langle \tilde \sigma \rangle^{\tilde M} \langle \tilde\rho \rangle$

$III_{K_3} \lesssim \langle \sigma \rangle^{M}\langle \tilde \sigma \rangle^{\tilde M} \langle \tilde\rho \rangle$

$IV_{K_3} \lesssim \langle \tilde \sigma \rangle^{\tilde M}\langle \tilde\rho \rangle^{M+1}$

$V_{K_3} \lesssim \langle \sigma \rangle^{M} \langle \tilde \sigma \rangle^{\tilde M+1} $

\bigskip
On interval 4, both $\abs{\rho} \sim \abs{\tilde{\rho}}$ and $\abs{\tilde\rho-\rho} \sim \abs{\tilde{\rho}}$, while the interval of integration has length comparable to $\tilde{\rho}$. Therefore, 
$R_{K_4} \lesssim \langle \sigma, \tilde \rho \rangle^M \langle \tilde \sigma, \tilde \rho \rangle^{\tilde M}\langle \tilde\rho \rangle.$ 

Now we estimate on the different regions to get: 

$I_{K_4} \lesssim \langle \tilde{\rho} \rangle ^{M + \tilde{M}+1}$ 

$II_{K_4}  \lesssim \langle \sigma \rangle^{M}\langle \tilde \sigma \rangle^{\tilde M} \langle \tilde\rho \rangle$

$III_{K_4} \lesssim \langle \sigma \rangle^{M}\langle \tilde \sigma \rangle^{\tilde M} \langle \tilde\rho \rangle$

$IV_{K_4} \lesssim \langle \tilde \sigma \rangle^{\tilde M}\langle \tilde\rho \rangle^{M+1}$

$V_{K_4} \lesssim \langle \sigma \rangle^{M} \langle \tilde\rho \rangle^{\tilde M+1}$

\bigskip
Finally, we can sum each region across the four intervals, to obtain: 

$\displaystyle  I \lesssim \langle \tilde{\rho}\rangle^{\tilde{M}}\langle \sigma\rangle^{M+1}+\langle \tilde{\rho}\rangle^{M}\langle \tilde\sigma\rangle^{\tilde M+1} $

$\displaystyle  II \lesssim \langle \sigma \rangle^{M+1}\langle \tilde \sigma \rangle^{\tilde M}$ 

$\displaystyle  III \lesssim \langle \sigma \rangle^{M}\langle \tilde \sigma \rangle^{\tilde M+1}$

$\displaystyle  IV \lesssim \langle \sigma \rangle^{M+1}\langle \tilde \sigma \rangle^{\tilde M}$

$\displaystyle  V \lesssim \langle \sigma \rangle^{M}\langle \tilde \sigma \rangle^{\tilde M+1}$.

Regions, II, III, IV and V can be unified into one region as in the lemma, proving the first part of the statement.

Derivatives in $\sigma$ get passed through the integral to the first symbol, the only one which depends on $\sigma$. This effectively reduces $M$ to $M-1$. Thus on $\Gamma_1$, as defined in Lemma \ref{lemma-a3}, we have:
$\abs{\partial_{\sigma} a_3} \leq \langle \sigma \rangle^{-1} \langle \tilde{\rho}\rangle^{\tilde{M}}\langle \sigma\rangle^{M+1}+\langle \tilde{\rho}\rangle^{-1}\langle \tilde{\rho}\rangle^{M}\langle \tilde\sigma\rangle^{\tilde M+1} \leq \langle \sigma \rangle^{-1} \text{(original estimate)}$ since on $\Gamma_1$, $\abs{\tilde{\rho}} \geq \max(\abs{\sigma},\abs{\tilde{\sigma}})$, and therefore $\langle \tilde{\rho}\rangle^{-1} \leq \langle \sigma \rangle^{-1}$. Similarly, derivatives in $\tilde{\sigma}$ get passed under the integral and distributed to  $a_2$, effectively reducing the order of $\tilde{M}$ by 1, and by similar reasoning on $\Gamma_1$, $\abs{\partial_{\tilde \sigma} a_3} \leq \langle\tilde \sigma \rangle^{-1} \text{(original estimate)}.$ On $\Gamma_2$ the same derivative estimates hold automatically, as soon as we reduce the respective exponents by one. 

Now, we turn to the analysis of derivatives with respect to $\tilde\rho$. We use the property $\partial_x(f*g)=\partial_x f*g=f*\partial_x g$. On $\Gamma_1$, which coincides with region I, we distribute the derivative to $a_1$, effectively reducing $M$ by one and estimate the summation of the integral over the four intervals. 
$I \lesssim I_1+I_2+I_3+I_4 \lesssim \langle \tilde{\rho} \rangle ^{M + \tilde{M}}+  \langle \sigma \rangle^{M} \langle \tilde\rho \rangle^{\tilde M}+\langle \tilde\sigma \rangle^{\tilde M+1} \langle \tilde\rho \rangle^{M-1}+ \langle \tilde{\rho} \rangle ^{M + \tilde{M}}.$ There is a gain of $\tilde{\rho}$ everywhere, except on interval 2, so we refine our estimates there by using the mean value property of the derivative and a Taylor expansion to show a similar improvement. In the notation below, we suppress the spatial variables, and denote with $\partial_2$ derivatives with respect to the second phase variable.

\begin{equation*}
  \begin{array}{ll}
       \partial_{\tilde{\rho}}(a_1*a_2)& =(\partial_2 a_1)*a_2 = \int_{\abs{\rho}\leq \frac{1}{2}\abs{\tilde{\rho}}}\partial_2 a_1(\sigma, \rho) a_2(\tilde \sigma, \tilde\rho-\rho)d\rho \\
      &\approx \int_{\abs{\rho}\leq \frac{1}{2}\tilde{\rho}}\partial_2 a_1(\sigma, \rho)(a_2(\tilde \sigma, \tilde\rho) - \rho \partial_2 a_2(\tilde\sigma, \tilde\rho))d\rho\\
      &=\int_{\RR}\partial_2 a_1(\sigma, \rho)a_2(\tilde \sigma, \tilde\rho)d\rho\\
      &-\int_{\abs{\rho}\geq \frac{1}{2}\abs{\tilde{\rho}}}\partial_2 a_1(\sigma, \rho)a_2(\tilde \sigma, \tilde\rho)d\rho -  \partial_2 a_2(\tilde\sigma, \tilde\rho)\int_{\abs{\rho}\leq \frac{1}{2}\abs{\tilde{\rho}}}\rho \partial_2 a_1(\sigma, \rho)d\rho
    \end{array}
 \end{equation*}
By the mean value property, the first integral of the last expression is 0. Putting the absolute value on both sides, we estimate:
\begin{equation*}
  \begin{array}{l}
       \abs{\partial_{\tilde{\rho}}(a_1*a_2)}\\
       =\abs{a_2(\tilde \sigma, \tilde\rho)}\int_{\abs{\rho}\geq \frac{1}{2}\abs{\tilde{\rho}}}\abs{\partial_2 a_1(\sigma, \rho)}d\rho + \abs{\partial_2 a_2(\tilde\sigma, \tilde\rho)}\int_{\abs{\rho}\leq \frac{1}{2}\abs{\tilde{\rho}}}\abs{\rho}\abs{ \partial_2 a_1(\sigma, \rho)}d\rho\\
       \lesssim \langle \tilde \rho \rangle^{\tilde{M}} \int_{\abs{\rho}\geq \frac{1}{2}\abs{\tilde{\rho}}}\langle \rho \rangle^{M-1} d\rho + \langle\tilde{\rho}\rangle^{\tilde{M}-1}(\int_{\abs{\rho}<\abs{\sigma}} + \int_{\abs{\sigma}<\abs{\rho}<\frac{1}{2}\abs{\tilde{\rho}}}) \langle \rho \rangle \langle \sigma, \rho \rangle^{M-1}d\rho\\
       \lesssim \langle \tilde \rho \rangle^{\tilde{M}+M} + \langle\tilde{\rho}\rangle^{\tilde{M}-1}(\int_{\abs{\rho}<\abs{\sigma}} + \int_{\abs{\sigma}<\abs{\rho}<\frac{1}{2}\abs{\tilde{\rho}}}) \langle \rho \rangle \langle \sigma, \rho \rangle^{M-1}d\rho\\
       \lesssim \langle \tilde \rho \rangle^{\tilde{M}+M} + \langle\tilde{\rho}\rangle^{\tilde{M}-1}(\langle \sigma \rangle ^{M-1}\langle \sigma \rangle ^{2}+\langle \sigma \rangle ^{M+1})\\
       \lesssim \langle\tilde{\rho}\rangle^{\tilde{M}-1}\langle \sigma \rangle ^{M+1}
    \end{array}
 \end{equation*}
 
 Now, replacing $I_2$ with this refined estimate, we get: 
 \begin{equation*}
 \begin{split}
 &I \lesssim \langle \tilde{\rho}\rangle^{\tilde{M}-1}\langle \sigma\rangle^{M+1}+\langle \tilde{\rho}\rangle^{M-1}\langle \tilde\sigma\rangle^{\tilde M+1}\\ 
 &\lesssim \langle \tilde{\rho}\rangle^{-1}\text{(original estimate)}\lesssim \langle \sigma, \tilde{\sigma}, \tilde{\rho}\rangle^{-1}\text{(original estimate)},
 \end{split}
 \end{equation*}
 since on $\Gamma_1$, $\langle \sigma, \tilde{\sigma}, \tilde{\rho}\rangle \sim \langle \tilde{\rho}\rangle.$

 $\Gamma_2$ consists of regions II, III, IV, V. In regions III and V, $\langle\sigma\rangle$ dominates, and therefore, $\langle\sigma\rangle \sim \langle\sigma, \tilde\sigma \rangle \sim \langle\sigma, \tilde\sigma, \tilde \rho \rangle.$ We calculate the derivative of the convolution by distributing the derivative to $a_1$, effectively reducing $M$ by 1. We get: 
 \begin{equation*}
 \begin{split}
  \abs{\partial_{\tilde{\rho}}(a_1*a_2)}&\lesssim \frac{\langle \sigma \rangle^{M} \langle \tilde{\sigma} \rangle ^{\tilde{M}+1}}{\langle \sigma, \tilde{\sigma} \rangle} =
 \langle \sigma, \tilde{\sigma} \rangle^{-1}\frac{\langle \sigma \rangle^{M+1} \langle \tilde{\sigma} \rangle ^{\tilde{M}+1}}{\langle \sigma, \tilde{\sigma} \rangle}\\
 &\lesssim \langle \sigma, \tilde{\sigma}, \tilde{\rho} \rangle^{-1}\text{(original estimate)}. 
 \end{split}
 \end{equation*}
 On regions II and IV, where $\langle\tilde\sigma\rangle$ dominates, $\langle\tilde\sigma\rangle \sim \langle\sigma, \tilde\sigma \rangle \sim \langle\sigma, \tilde\sigma, \tilde \rho \rangle.$ Now we distribute the derivative to $a_2$, reducing $\tilde{M}$ by 1. Repeating the analysis above, we obtain the final statement  of the lemma. 

\end{proof}
 
 We continue our analysis of the composition kernel
 \begin{equation}\label{eq-kernel}
  K_{A_1^*A_2}(x,y)=\int_{\RR^{3n+1}}e^{i[(x-z)\cdot \sigma + (z-y)\cdot \tilde{\sigma}+h(z)\tilde{\rho}]} a_3(x,y,z; \sigma, \tilde{\sigma}, \tilde{\rho}) \,d\tilde{\rho}\,d\sigma \,d\tilde{\sigma} \,dz.
 \end{equation}

Let us introduce a smooth function $\chi(t)$, which is identically 0 on $\abs{t}\leq \frac{1}{2}$ and identically 1 on $\abs{t}\geq 1$. We let $\chi_1(z;\sigma, \tilde\sigma, \tilde\rho)=\chi\left(\frac{\abs{\sigma - \tilde\rho \grad h(z)}}{\abs{\sigma, \tilde\rho}}\right)$ and  $\chi_2(z;\sigma, \tilde\sigma, \tilde\rho)=1-\chi_1(z;\sigma, \tilde\sigma, \tilde\rho)$. With this choice, $\{\chi_j\}$ form a partition of unity, subordinate to the sets: 
\begin{equation}\label{eq-good}
\Omega_{good}:=\{(z, \sigma, \tilde\sigma, \tilde\rho) \vert \langle \sigma-\tilde\rho\grad h(z) \rangle > \frac{1}{2}\langle \sigma, \tilde\rho \rangle\}  
\end{equation}
and
\begin{equation}\label{eq-bad}
\Omega_{bad}:=\{(z,\sigma, \tilde\sigma, \tilde\rho) \vert \langle \sigma-\tilde\rho\grad h(z) \rangle < \langle \sigma, \tilde\rho \rangle\}. 
\end{equation}
We will refer to these sets as $\Omega_g$ and $\Omega_b$ for short. Each $\chi_j$ belongs to the standard symbol class $S_{1,0}^0(\RR_z^n \times \RR_{\sigma, \tilde\sigma, \tilde\rho}^{2n+1}\setminus 0)$, which means multiplication by $\chi_j$ does not affect the estimates on $a_3$ and is carried out implicitly in the analysis below.

We start by analyzing the contribution to $K_{A_1^*A_2}$ from $\Omega_g$. We proceed with stationary phase in $z$ and $\tilde\sigma$. The phase function is:
\begin{equation} \label{eq-phase}
 \Phi(x,y,z; \sigma, \tilde\sigma, \tilde\rho)=(x-z)\cdot\sigma+(z-y)\cdot \tilde\sigma+h(z)\tilde\rho
\end{equation} 
Since
$$\Phi_z=-\sigma+\tilde\sigma+\tilde\rho\grad h(z),$$
$$\Phi_{\tilde\sigma}=z-y,$$
the critical point is $z=y$ and $\tilde\sigma=\sigma-\tilde\rho\grad h(z)$.
The Hessian of $\Phi$ is 

$$\cal{H}=\begin{bmatrix} \tilde\rho Hh(z) & I_n \\ I_n & 0 \end{bmatrix},$$
where $Hh(z)$ is the Hessian of the defining function $h$ at $z$. Note that $\abs{\det (\mathcal{H})}=1$.

\begin{equation} \label{eq-sp1}
  K_{A_1^*A_2}(x,y)\sim \int_{\RR^{n+1}}e^{i[(x-y)\cdot \sigma + h(y)\tilde{\rho}]} a_3(x,y,y; \sigma, \sigma-\tilde\rho\grad h(y), \tilde{\rho}) d\tilde{\rho}d\sigma, 
\end{equation}
provided a valid asymptotic expansion, which we justify below. 
Now, let $b(x,y; \sigma, \tilde\rho)=a_3(x,y,y; \sigma, \sigma-\tilde\rho\grad h(y), \tilde{\rho})$.
$b$ is supported on $\Omega_g$, and satisfies the estimate
$$\abs{\partial_{\sigma}^{\gamma} \partial_{\tilde\rho}^{\alpha}b} \lesssim \langle \sigma \rangle^{M+1-\abs{\gamma}}\an{\sigma, \tilde\rho}^{\tilde M-\abs{\alpha}}$$

This means that $b\in S^{\tilde M, M+1}(\RR^{2n}; \RR^1\setminus\{0\}, \RR^n)$, a symbol-valued symbol as defined by (\ref{eq-smm'}). 

Now, we analyze
\begin{equation} \label{eq-b1}
 u_g=\int_{\RR^{n+1}}e^{i[h(y)\tilde{\rho}+(x-y)\cdot \sigma]} b(x,y; \sigma, \tilde{\rho})\,d\tilde{\rho}\,d\sigma.
\end{equation}

By \cite{Mendoza82}, we can analyze the phase function in two parts. Let $\phi_1(x,y; \tilde\rho, \sigma)=h(y)\tilde\rho+(x-y)\cdot\sigma$, and $\phi_0=\phi_1(x,y; \tilde\rho, 0)=h(y)\tilde\rho$.
$\phi_0$ defines the $2n$-dimensional Lagrangian 
$$\Lambda_0=\{(x, y, 0, \tilde\rho \grad h(y)) \vert x\in \RR^n, y \in S, \tilde\rho \in \RR\setminus 0 \}$$ and the associated canonical relation:
$$ C_0:= \Lambda_0'=\{(x, 0, y, -\tilde\rho \grad h(y)) \vert x\in \RR^n, y \in S, \tilde\rho \in \RR\setminus\ 0\} = \Lambda_0. $$
$\phi_1$ defines the $2n$-dimensional Lagrangian 
$$\Lambda_1=\{(x, x, \sigma, -\sigma+\tilde\rho \grad h(y)) \vert x\in S, \sigma \in \RR^n, \tilde\rho \in \RR, (\sigma, \rho) \neq ({\bf 0},0)\}$$ 
and the associate canonical relation:
$$ C_1:= \Lambda_1'=\{(x, \sigma, x, \sigma-\tilde\rho \grad h(x)) \vert x\in S, \sigma \in \RR^n, \tilde\rho \in \RR, (\sigma, \rho) \neq({\bf 0},0)\}=C_{\Sigma}$$
The two canonical relations intersect cleanly in codimension $n$. Therefore, by Mendoza, $u_g \in I^{p,l}(C_0', C_{\Sigma}')$. To calculate the subscripts, we note that $p=\text{order }u_g \vert_{(C_{\Sigma}\setminus C_0)}$ and $\sigma_{pr}(u_g \vert_{(C_{\Sigma}\setminus C_0)})\sim \text{dist}((x, \sigma), C_0\cap C_{\Sigma})^{-l-\frac{n}{2}}$.
Therefore $p=M+\tilde M +1 + \frac{n+1}{2}-\frac{2n}{4}=M+\tilde M+\frac{3}{2}=m_1-\frac{1}{2}+m_2-\frac{1}{2}+\frac{3}{2}=m_1+m_2+\frac{1}{2}$. For the second subscript, we know $-l-\frac{n}{2}=M+1=m_1+\frac{1}{2}$, so $l=-\frac{n+1}{2}-m_1$. Thus, $u_g \in I^{m_1+m_2+\frac{1}{2}, -\frac{n+1}{2}-m_1}(C_0', C_{\Sigma}')$, and therefore so does $K_{A_1^*A_2}$ on $\Omega_g$, as long as we can justify the asymptotic expansion, implied by (\ref{eq-sp1}).

In order to justify the expansion, we need to show that applying the operator $L:=\div{\cal{H}^{\text{-1}} \grad}$
to $a_3$ and evaluating at the critical point leads to additional decay in the elliptic variable, and therefore in $\an{\sigma, \tilde\rho}$. Here,
$$\cal{H}^{\text{-1}}=\begin{bmatrix} 0 & I_n \\ I_n & -\tilde\rho Hh(z) \end{bmatrix},$$

We can rewrite $L$ as
$$L=\sum_{i=1}^n 2 \frac{\partial^2}{\partial z_i \partial \tilde{\sigma}_i}-\tilde \rho \sum_{i,j=1}^n \frac{\pa^2 h}{\pa z_i \pa z_j} \frac{\pa^2}{\pa \tilde\sigma_i \pa \tilde\sigma_j}:=L_1+L_2.$$

Applying $L_1$ to $a_3$ results in a gain of $\an{\tilde\sigma}^{-1}$, which at the critical point ($z=y, \tilde\sigma=\sigma - \tilde\rho \grad h(z)$) is a gain of $\an{\sigma, \tilde\rho}^{-1}$ on $\Omega_g$. Similarly, $\frac{\pa^2}{\pa \tilde\sigma_i \pa \tilde\sigma_j}$ leads to a gain of $\an{\tilde\sigma}^{-2}\sim\an{\sigma,\tilde\rho}^{-2}$, so the multiplication by $\tilde\rho$ in $L_2$ leads to a gain of $\an{\sigma,\tilde\rho}^{-1}$ as claimed.

It remains to understand $K_{A_1^*A_2}$ on the complement of $\Omega_g$, namely $\Omega_b$, as defined in (\ref{eq-bad}).
On $\Omega_b$, $\an{\sigma}\sim\an{\tilde\rho}$. Recall the space partition into regions $\Gamma_1$ and $\Gamma_2$, defined in lemma \ref{lemma-a3}. Specializing these to $\Omega_b$, we define:
$$\Gamma_{b,1}:=\{\abs{\sigma}\sim \abs{\tilde\rho} \geq c_1 \abs{\tilde{\sigma}}\},$$
$$\Gamma_{b,2}:=\{\abs{\sigma}\sim \abs{\tilde\rho} \leq c_2 \abs{\tilde{\sigma}}\}.$$ 

We proceed to prove that the contribution to $K_{A_1^*A_2}$ from i) $\Gamma_{b,1}$ is in $I^{p,l}(C_0^{t'}, C_{\Omega}')$, where  $p$ is as calculated before and $l=-\frac{n+1}{2}-m_2$, and $C_0^t$ is the transpose of $C_0$, and ii) $\Gamma_{b,2}$ is $C^{\infty}$. 

To prove i) we note that on $\Gamma_{b,1}$, $(\tilde{\rho},\sigma)$ are the larger variables, which dominate $\tilde\sigma$. Thus, we apply stationary phase in $z, \sigma$, instead of $z, \tilde\sigma$.

The critical point is now $z=x$ and $\sigma=\tilde\sigma+\tilde\rho\grad h(z)$. 

$$\cal{H}=\begin{bmatrix} \tilde\rho Hh(z) & -I_n \\ -I_n & 0 \end{bmatrix}, \cal{H}^{\text{-1}}=\begin{bmatrix} 0 & -I_n \\ -I_n & -\tilde\rho Hh(z) \end{bmatrix},$$
with $\abs{\det (\mathcal{H})}=1$. The operator in the stationary phase expression is now 
$$L=\sum_{i=1}^n -2 \frac{\partial^2}{\partial z_i \partial {\sigma}_i}-\tilde \rho\sum_{i,j=1}^n \frac{\pa^2 h}{\pa z_i \pa z_j} \frac{\pa^2}{\pa \sigma_i \pa \sigma_j}:=L_1+L_2,$$
leading to a gain of $\an{\sigma}^{-1}\sim\an{\sigma, \tilde\rho}^{-1}$ in $a_3$, which evaluated at the critical point is a gain of $\an{\tilde\rho}^{-1}\sim\an{\tilde\rho, \tilde\sigma}^{-1}$. Thus, on $\Gamma_{b,1}$,
\begin{equation}\label{eq-b2}
  u_b=\int_{\RR^{n+1}}e^{i[h(x)\tilde{\rho}+(x-y)\cdot \tilde\sigma]} b(x,y; \tilde\sigma, \tilde{\rho}) \,d\tilde{\rho}\,d\tilde\sigma
\end{equation}

Here  $b(x,y; \sigma, \tilde\rho)=a_3(x,y,x;  \tilde\sigma+\tilde\rho\grad h(y), \tilde\sigma, \tilde{\rho})$ and $b \in S^{M, \tilde M+1}(\RR^{2n}; \RR^1\setminus 0, \RR^n)$. Notice the similarity between $u_g$ and $u_b$ (equations (\ref{eq-b1}) and (\ref{eq-b2})), except $h(y)$ in $u_g$ is replaced by $h(x)$ in $u_b$. Similarly to before, we define:
$\phi_1(x,y; \tilde\rho, \tilde\sigma)=h(x)\tilde\rho+(x-y)\cdot\tilde\sigma$ and $\phi_0=\phi_1(x,y; \tilde\rho, 0)=h(x)\tilde\rho$. Then:

$$ \Lambda_0'=\{(x, \tilde\rho \grad h(x), y, 0) \vert x\in S, y \in \RR^n, \tilde\rho \in \RR\setminus 0\}=: C_0^t,$$
$$ \Lambda_1'=\{(x, \tilde\sigma+\tilde\rho \grad h(x), x, \tilde\sigma) \vert x\in S, \tilde\sigma \in \RR^n, \tilde\rho \in \RR, (\tilde\sigma, \tilde\rho) \neq(0^n,0)\}=C_{\Sigma}.$$

Once again, $C_0^t$ intersects $C_{\Sigma}$ cleanly in codimension $n$
$$p=M+\tilde M +1 + \frac{n+1}{2}-\frac{2n}{4}=m_1+m_2+\frac{1}{2}$$
$$-l-\frac{n}{2}=\tilde M+1=m_2+\frac{1}{2}$$
$$l=-\frac{n+1}{2}-m_2$$

Thus,  $u_b \in I^{m_1+m_2+\frac{1}{2}, -\frac{n+1}{2}-m_2}(C_0^{t'}, C_{\Sigma}')$, and therefore so does the contribution to $K_{A_1^*A_2}$ from $\Gamma_{b,1}$.
  
Last, we analyze the properties of $K_{A_1^*A_2}$ on $\Gamma_{b,2}$. For the phase function in (\ref{eq-phase}), we have
$$\abs{\grad_z \Phi}=\abs{-\sigma+\tilde\sigma+\tilde\rho \grad h(z)}\geq \eps \abs{\tilde\sigma}.$$
Since the gradient never vanishes, there exists a differential operator
$$L_z=\sum a_j(z, \tilde\sigma) \frac{\pa}{\pa z_j}, a_j\in S^{-1}, L_z(e^{i\Phi})=e^{i\Phi}$$

We can integrate by parts as many times, $N$, as we want and 
\begin{equation}
(L_z^t)^N(a_3)\lesssim \an{\sigma, \tilde{\rho}}^{M+1}\an{\tilde{\sigma}}^{\tilde M-N}. 
\end{equation}
Using the fact that on $\Gamma_{b,2}$, $\abs{\sigma, \tilde\rho}\leq c\abs{\tilde\sigma}$, we obtain:
$$K_{A_1^*A_2}(x,y)\lesssim \int_{\RR^n} \int_{\RR^n} \int_{\abs{\sigma, \tilde\rho}\leq c\abs{\tilde\sigma}}\an{\sigma, \tilde{\rho}}^{M+1}\an{\tilde{\sigma}}^{\tilde M-N} d\sigma d\tilde\rho d\tilde\sigma dz$$
which converges absolutely, provided $N$ is large enough. Increasing the number of times, $N$, in which we differentiate by parts, we can see that $K_{A_1^*A_2}\in C^{\infty}(\Gamma_{b,2})$.

\bibliographystyle{plain} 
\bibliography{GreensFunction} 

\begin{thebibliography}{10}

\bibitem{Antoniano85}
J.~Antoniano and G.~Uhlmann.
\newblock A functional calculus for a class of pseudodifferential operators
  with singular symbols.
\newblock {\em Proc. Symp. Pure Math.}, 43:5--16, 1985.

\bibitem{Duistermaat96}
J.~J. Duistermaat.
\newblock {\em Fourier Integral Operators}.
\newblock Birkh{\"a}user, 1996.

\bibitem{Duistermaat75}
J.~J. Duistermaat and V.~W. Guillemin.
\newblock The spectrum of positive elliptic operators and periodic
  bicharacteristics.
\newblock {\em Invent. Math.}, 29:39--79, 1975.

\bibitem{Greenleaf90}
A.~Greenleaf and G.~Uhlmann.
\newblock Estimates for singular radon transforms and pseudodifferential
  operators with singular symbols.
\newblock {\em J. Funct. Anal.}, 89:202--232, 1990.

\bibitem{Greenleaf01}
A.~Greenleaf and G.~Uhlmann.
\newblock Characteristic space-time estimates for the wave equation.
\newblock {\em Mathematische Zaitschrift}, 236:113--131, 2001.

\bibitem{Guillemin81}
V.~Guillemin and G.~Uhlmann.
\newblock Oscillatory integrals with singular symbols.
\newblock {\em Duke Math. J.}, 48:251--267, 1981.

\bibitem{Hormander71}
L.~H{\"o}rmander.
\newblock Fourier integral operators {I}.
\newblock {\em Acta Math.}, 127:79--183, 1971.

\bibitem{Kuchment2012}
P.~Kuchment and D.~Steinhauer.
\newblock Stabilizing inverse problems by internal data.
\newblock {\em Inverse Problems}, 28:1--20, 2012.

\bibitem{Littman63}
W.~Littman, G.~Stampacchia, and H.~F. Weinberger.
\newblock Regular points for elliptic equations with discontinuous
  coefficients.
\newblock {\em Ann. Scuola Norm. Sup. Pisa (3)}, 17:43--77, 1963.

\bibitem{Melrose79}
R.~Melrose and G.~Uhlmann.
\newblock Lagrangian intersection and the cauchy problem.
\newblock {\em Comm. Pure Appl. Math}, 32:483--519, 1979.

\bibitem{Mendoza82}
G.~Mendoza.
\newblock Symbol calculus associated with intersecting lagrangians.
\newblock {\em Comm. Partial Differential Equations}, 7:9:1035--1116, 1982.

\bibitem{Weinstein74}
A.~Weinstein.
\newblock On maslov's quantization condition.
\newblock {\em Lecture Notes in Math. Fourier integral operators and partial
  differential equations}, 459:341--372, 1975.

\end{thebibliography}

\end{document}